\newtheorem{theorem}{Theorem}[section]
\newtheorem{corollary}[theorem]{Corollary}
\newtheorem{remark}[theorem]{Remark}
\newtheorem{conjecture}[theorem]{Conjecture}
\newtheorem{lemma}[theorem]{Lemma}
\def\qed{\hfill \rule{5pt}{7pt}}
\keywords{Overpartition, theta function, congruence, Modular Form, Radu's Algorithm.}
\subjclass[2020]{05A17, 11P81, 11P83.}
\begin{document}

\title[Overpartitions into nonmultiples of two integers]{Some Properties of Overpartitions into nonmultiples of two integers}

\author[A. M. Alanazi]{Abdulaziz M. Alanazi}
\address{Department of Mathematics, Faculty of Sciences, University of Tabuk, P.O.Box 741, Tabuk 71491, Saudi Arabia}
\email{am.alenezi@ut.edu.sa}

\author[A. O. Munagi]{Augustine O. Munagi}
\address{School of Mathematics, University of the Witwatersrand, Johannesburg, South Africa}
\email{augustine.munagi@wits.ac.za}

\author[M. P. Saikia]{Manjil P. Saikia}
\address{Mathematical and Physical Sciences division, School of Arts and Sciences, Ahmedabad University, Ahmedabad 380009, Gujarat, India}
\email{manjil@saikia.in}

\thanks{The work of the last author is partially supported by an Ahmedabad University Start-Up Grant (Reference No. URBSASI24A5).}


\begin{abstract}
We consider properties of overpartitions that are simultaneously $\ell$-regular and $\mu$-regular, where $\ell$ and $\mu$ are positive relatively prime integers. We prove a seven-way combinatorial identity related to these overpartitions. We also prove several congruence properties satisfied by this class of partitions (and a further related class) using both generating functions and modular forms with Radu's Algorithm.
\end{abstract}

\maketitle

\section{Introduction}\label{intro}
A partition of a positive integer $n$ is a non-increasing sequence of positive integers $\lambda=(\lambda_1, \lambda_2, \ldots, \lambda_k)$ that sum to $n$. The $\lambda_i$'s are called parts of the partition. 
An overpartition of $n$ is a partition of $n$ where the first occurrence of each part size may be overlined. We denote the number of overpartitions of $n$ by $\overline{p}(n)$, with $\overline{p}(0)=1$. For example, $\overline{p}(3)=8$, which enumerates the following overpartitions
$$(3),\, (\overline{3}),\, (2,1),\, (\overline{2},1),\ (2,\overline{1}),\, (\overline{2},\overline{1}),\, (1,1,1),\, (\overline{1},1,1).$$
The three overpartitions with no overlined parts are the ordinary partitions of 3.
We will also use the alternative notation of a partition: $\lambda=(c_1^{u_1},c_2^{u_2},\ldots,c_r^{u_r})$, where $c_1>c_2>\cdots >c_r>0$ and $r\leq k$.

Given a positive integer $\ell$ a partition $\lambda$ is called $\ell$-regular if no part of $\lambda$ is a multiple of $\ell$.
Munagi and Sellers \cite{MunagiSellers2013} studied combinatorial and arithmetic properties of overpartitions of $n$ with $\ell$-regular overlined parts, denoted by $A_{\ell}(n)$. Alanazi and Munagi \cite{AlanaziMunagi} investigated the combinatorial identities for  $\ell$-regular overpartitions of $n$, denoted by $\overline{R_{\ell}}(n)$. In addition, Alanazi et. al. \cite{AlanaziAlenaziKeithMunagi} studied combinatorial and arithmetic properties of the overpartitions of $n$ with $\ell$-regular non-overlined parts. We also note that Alanazi et. al. \cite{AlanaziMunagiSellers} and Shen \cite{Shen} discussed various arithmetic properties of $\overline{R_{\ell}}(n)$. We also recall the generating function from \cite{AlanaziAlenaziKeithMunagi}
\begin{equation}\label{eq:gf-rast}
   \sum_{n\geq 0}\overline{R_\ell^\ast}(n)q^n = \prod\limits_{n=1}^{\infty }\frac{(1-q^{n\ell})(1+q^n)}{1-q^n} = \frac{f_2f_\ell}{f_1^2},
\end{equation}
where $\overline{R_\ell^\ast}(n)$ counts overpartitions of $n$ wherein non-overlined parts are $\ell$-regular and there are no restrictions on the overlined parts. Alanazi et. al. \cite{AlanaziAlenaziKeithMunagi} proved several interesting results related to this function, some of which were recently extended by Sellers \cite{SellersBAMS}.
The second equality in \eqref{eq:gf-rast} follows from the notation $f^k_n$ is defined by $f^k_n:=\prod_{i\geq 1}(1-q^{in})^k$ for all integers $n,k$ with $n>0$.

In this paper, we extend the previous studies and consider properties of overpartitions that are simultaneously $\ell$-regular and $\mu$-regular, where $\ell$ and $\mu$ are positive relatively prime integers.
Let $\overline{R_{\ell,\mu}}(n)$ be the number of overpartitions of $n$ where no parts are divisible by $\ell$ or $\mu$, where $\gcd (\ell,\mu)=1$. We will also call this function the number of $(\ell,\mu)$-regular overpartitions of $n$.
It is easy to see that the generating function is given by 
\begin{equation}\label{eq-gf}
\sum_{n\geq 0}\overline{R_{\ell,\mu}}(n)q^n=\prod\limits_{n=1}^{\infty }\frac{ (
1+q^{n})( 1-q^{\ell n})( 1-q^{\mu n})( 1+q^{\ell\mu n}) }{( 1-q^{n})( 1+q^{\ell n})( 1+q^{\mu n})( 1-q^{\ell\mu n}) } =\frac{f_2f_\ell^2f_\mu^2f_{2\mu\ell}}{f_1^2f_{2\ell}f_{2\mu}f_{\mu\ell}^2}.
\end{equation}

The paper is organized as follows. In Section \ref{sec:two} we prove a seven-way partition identity using generating functions and combinatorial techniques thus establishing the equivalence of several sets of partitions. In Section \ref{sec:three} we prove some congruences modulo small powers of $2$ satisfied by the functions $\overline{R_{\ell,\mu}}(n)$ and $\overline{R_\ell^\ast}(n)$. Then in Section \ref{sec:four} we prove further congruences for $\overline{R_{\ell,\mu}}(n)$ and $\overline{R_\ell^\ast}(n)$ using modular forms, and end the paper in Section \ref{sec:conc} with concluding remarks.

\section{A General Partition Theorem}\label{sec:two}

Our first result is the following seven-way identity.
\begin{theorem}\label{thm7way}
Let
\begin{itemize}
    \item $A_{\ell,\mu}(\ell n)$ denote the number of $\mu$-regular partitions of $\ell n$ where parts that are non-multiples of $\ell$ appear $\ell$ times and parts that are multiples of $\ell$ appear less than $\ell$ times,
    \item $B_{\ell,\mu}(2n)$ denote the number of $(\ell,\mu)$-regular partitions of $2n$ in which $\ell$ and $\mu$ are odd and odd parts occur with even multiplicities,
    \item $C_{\ell,\mu}(2n)$ denote the number of $\mu$-regular partitions of $2n$ in which odd parts appear with multiplicities $2,4,\ldots,2(\ell-2)$ or $2(\ell-1)$ and even parts appear less than $\ell$ times, where $\ell < \mu$ and $\ell$ and $\mu$ are odd,
    \item $D_{\ell,\mu}(2n)$ denote the number of $(\ell,2\mu)$-regular partitions of $2n$ in which odd parts occur with even multiplicities and parts $\equiv \mu\pmod{2\mu}$ appear at most once, where $\mu$ is even,
    \item $E_{\ell,\mu}(2n)$ denote the number of $(2\mu)$-regular partitions of $2n$ in which odd parts occur with multiplicities $2,4,\ldots,2(\ell-2)$ or $2(\ell-1)$, even parts appear less than $\ell$ times and  parts $\equiv \mu\pmod{2\mu}$ appear at most once, where $\ell < \mu$ and $\mu$ is even, and
    \item $F_{\ell,\mu}(\ell n)$ denote the number of $(\ell^2,\mu)$-regular partitions of $\ell n$ in which parts that are non-multiples of $\ell$ appear either $0$ or $\ell$ times.
\end{itemize}

Then 
$$A_{\ell,\mu}(\ell n)=B_{\ell,\mu}(2 n)=C_{\ell,\mu}(2 n)=D_{\ell,\mu}(2 n)=E_{\ell,\mu}(2 n)=F_{\ell,\mu}(\ell n)=\overline{R_{\ell,\mu}}(n).$$
\end{theorem}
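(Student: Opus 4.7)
The plan is to show that each of the six counting functions in the theorem has the same generating function (in the indexing variable $n$) as $\overline{R_{\ell,\mu}}(n)$. I first recast \eqref{eq-gf} in the compact form
\[
\sum_{n\geq 0}\overline{R_{\ell,\mu}}(n)\,q^n \;=\; \prod_{\substack{k\geq 1\\ \ell\nmid k,\,\mu\nmid k}}\frac{1+q^k}{1-q^k},
\]
a routine rearrangement via $1+q^k = (1-q^{2k})/(1-q^k)$ and $\gcd(\ell,\mu)=1$. The task reduces to deriving this product from the constraints defining each of $A,\ldots,F$.

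For $A_{\ell,\mu}(\ell n)$ and $F_{\ell,\mu}(\ell n)$, counting directly gives
\[
\prod_{\ell\nmid k,\mu\nmid k}(1+q^{\ell k})\prod_{\ell\mid k,\mu\nmid k}\frac{1-q^{\ell k}}{1-q^k}\quad\text{and}\quad \prod_{\ell\nmid k,\mu\nmid k}(1+q^{\ell k})\prod_{\ell\mid k,\ell^2\nmid k,\mu\nmid k}\frac{1}{1-q^k}
\]
respectively, viewed as series in $q^{\ell n}$. Rescaling $q^\ell\mapsto q$ and reindexing multiples of $\ell$ by $k=\ell j$ (so $\mu\nmid\ell j\iff\mu\nmid j$), both collapse to the target via the key identity
\[
\prod_{\mu\nmid j}\frac{1-q^{\ell j}}{1-q^j}\;=\;\prod_{\ell\nmid j,\,\mu\nmid j}\frac{1}{1-q^j},
\]
which follows from $\prod_{\mu\nmid j}(1-q^{\ell j})=\prod_{\ell\mid m,\,\mu\nmid m}(1-q^m)$ telescoping against the denominator.

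For $B_{\ell,\mu}$ and $C_{\ell,\mu}$, with $\ell,\mu$ both odd, the analogous argument works after $q^2\mapsto q$: oddness of $\ell$ and $\mu$ preserves both divisibility conditions under $j\mapsto 2j$, and the split between odd and even parts, combined with $(1+q^k)/(1-q^k)=(1-q^{2k})/(1-q^k)^2$, yields the target. In $C$, two applications of the telescoping identity above dispose of the bounded-multiplicity products.

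The cases $D_{\ell,\mu}$ and $E_{\ell,\mu}$, where $\mu$ is even (forcing $\ell$ odd by $\gcd(\ell,\mu)=1$), are the most delicate because $\mu\nmid 2j\iff\mu\nmid j$ fails. The definitions compensate by substituting $(2\mu)$-regularity for $\mu$-regularity and allowing parts $\equiv\mu\pmod{2\mu}$ at most once; the latter injects a factor $\prod_{\ell\nmid 2j+1}\bigl(1+q^{(2j+1)\mu}\bigr)$ which, after $q^2\mapsto q$, together with the remaining products (split as odd $k$ with $\ell\nmid k$, and even $k=2k'$ with $\ell\nmid k'$ and $\mu/2\nmid k'$), rearranges into the target by the same $(1+q^k)/(1-q^k)=(1-q^{2k})/(1-q^k)^2$ trick. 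I expect these two cases to be the main obstacle: verifying that the ``at most once'' compensation exactly matches the deficit between $\mu$- and $(2\mu)$-regularity requires careful bookkeeping by parity, divisibility by $\ell$, and residue modulo $2\mu$. Once this is confirmed, the whole theorem reduces to the single product identity displayed above, applied under four different restriction sets.
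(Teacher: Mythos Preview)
Your approach is correct and is essentially the same as the paper's: both proofs write down the generating function for each of $A,\ldots,F$ directly from the defining constraints, manipulate the resulting products (using $1+q^k=(1-q^{2k})/(1-q^k)$ and $1+q^k+\cdots+q^{(\ell-1)k}=(1-q^{\ell k})/(1-q^k)$), and then match against \eqref{eq-gf} after the substitution $q^\ell\mapsto q$ or $q^2\mapsto q$. Your packaging is a bit tidier---you isolate the compact target $\prod_{\ell\nmid k,\,\mu\nmid k}(1+q^k)/(1-q^k)$ and a single reusable telescoping identity---whereas the paper carries out each case as a separate product computation, but the underlying mechanics are identical.

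The one point where your write-up remains a genuine sketch is $D$ and $E$. You correctly identify that the ``at most once'' clause on parts $\equiv\mu\pmod{2\mu}$ is there to repair the failure of $\mu\nmid 2j\iff\mu\nmid j$ when $\mu$ is even, but you stop short of actually verifying the bookkeeping. The paper's proof confirms that nothing subtle is hiding there: for $D$, one simply multiplies the $2\mu$-regular/even-multiplicity/at-most-once generating function by the $\ell$-regularizing factor and the product collapses to the $B$-generating function; $E$ is handled the same way with the bounded-multiplicity factors replacing the $\ell$-regular ones. So your anticipated ``main obstacle'' dissolves under exactly the manipulations you already used for $A$--$C$ and $F$, and no new idea is required.
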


\begin{proof} The generating function for $A_{\ell,\mu}(\ell n)$ is
\begin{align}
{\displaystyle\sum\limits_{n=0}^{\infty}}A_{\ell,\mu}(\ell n) q^{\ell n} \nonumber
&= {\displaystyle\prod\limits_{n=1}^{\infty}} \frac{( 1+q^{\ell(\ell n-1)})\cdots( 1+q^{\ell(\ell n-(\ell-1))}) ( 1+q^{\ell n}+\cdots+q^{(\ell-1)\ell n}) }{( 1+q^{\mu\ell(\ell n-1)})\dots( 1+q^{\mu\ell(\ell n-(\ell-1))}) ( 1+q^{\mu\ell n}+\dots+q^{(\ell-1)\mu\ell n}) }\\  \nonumber
&=  {\displaystyle\prod\limits_{n=1}^{\infty}} \frac{( 1+q^{\ell(\ell n-1)})\cdots( 1+q^{\ell(\ell n-(\ell-1))}) ( 1+q^{\ell n}+\cdots+q^{(\ell-1)\ell n}) }{( 1+q^{\mu\ell(\ell n-1)})\cdots( 1+q^{\mu\ell(\ell n-(\ell-1))}) ( 1+q^{\mu\ell n}+\dots+q^{(\ell-1)\mu\ell n}) }\nonumber \\& \qquad \times \frac{(1+q^{\ell^{2}n})(1+q^{\mu\ell^{2}n})}{(1+q^{\ell^{2}n})(1+q^{\mu\ell^{2}n})}\\ \nonumber
&=  {\displaystyle\prod\limits_{n=1}^{\infty}} \frac{( 1+q^{\ell n}) ( 1+q^{\ell n}+\dots+q^{(\ell-1)\ell n}) (1+q^{\mu\ell^{2}n})}{( 1+q^{\mu\ell n}) ( 1+q^{\mu\ell n}+\dots+q^{(\ell-1)\mu\ell n})(1+q^{\ell^{2}n}) }\\ \nonumber
&=  {\displaystyle\prod\limits_{n=1}^{\infty}} \frac{( 1+q^{\ell n}) ( 1-q^{\ell^2 n})( 1-q^{\mu\ell n}) (1+q^{\mu\ell^{2}n})}{( 1+q^{\mu\ell n})( 1-q^{\ell n})( 1-q^{\mu\ell^2 n}) (1+q^{\ell^{2}n}) }.
\end{align}
Replacing $q^\ell$ by $q$ yields the generating function for $\overline{R_{\ell,\mu}}(n)$, that is, \eqref{eq-gf}.

\begin{align}
{\displaystyle\sum\limits_{n=0}^{\infty}}B_{\ell,\mu}(2 n) q^{2 n} \nonumber
&= {\displaystyle\prod\limits_{n=1}^{\infty}} \frac{(1-q^{2\ell n})(1-q^{2\mu n})(1-q^{2\ell(2n-1)})(1-q^{2\mu(2n-1)}) }{(1-q^{2n})(1-q^{2(2n-1)})(1-q^{2\ell\mu n})(1-q^{2\ell \mu(2n-1)}) }\\  \nonumber
&=  {\displaystyle\prod\limits_{n=1}^{\infty}} \frac{(1+q^{2n})(1-q^{2\ell n})(1-q^{2\mu n})(1+q^{2\ell \mu n}) }{(1-q^{2n})(1+q^{2\ell n})(1+q^{2\mu n})(1-q^{2\ell\mu n}) }.
\end{align}
Then replacing $q^2$ by $q$ yields \eqref{eq-gf}.

\begin{align}
{\displaystyle\sum\limits_{n=0}^{\infty}}C_{\ell,\mu}(2 n) q^{2 n} \nonumber
&= {\displaystyle\prod\limits_{n=1}^{\infty}} \frac{(1+q^{2n}+\dots+q^{(\ell-1)2n})(1+q^{2(2n-1)}+\dots+q^{2(\ell-1)(2n-1)}) }{(1+q^{2\mu n}+\dots+q^{(\ell-1)2\mu n})(1+q^{2\mu(2n-1)}+\dots+q^{2(\ell-1)\mu(2n-1)}) }\\  \nonumber
&=  {\displaystyle\prod\limits_{n=1}^{\infty}} \frac{(1-q^{2\ell n})(1-q^{2\ell(2n-1)})(1-q^{2\mu n})(1-q^{2\mu(2n-1)}) }{(1-q^{2n})(1-q^{2(2n-1)})(1-q^{2\ell\mu n})(1-q^{2\ell\mu(2n-1)}) }\\&={\displaystyle\sum\limits_{n=0}^{\infty}}B_{\ell,\mu}(2 n) q^{2 n}.
\end{align}

The generating function of $2\mu$-regular partitions of $n$ in which odd parts occur with even multiplicities and each part $\equiv \mu\pmod{2\mu}$ appears at most once is
\[\prod\limits_{n=1}^{\infty }\frac{1-q^{2\mu n}}{(1-q^{2n})(1+q^{2\mu n})(1-q^{2(2n-1)})}.
\]
Thus, the generating function for $D_{\ell,\mu}(2n)$ is
\begin{align}
{\displaystyle\sum\limits_{n=0}^{\infty}}D_{\ell,\mu}(2 n) q^{2 n} \nonumber
&= {\displaystyle\prod\limits_{n=1}^{\infty}} \frac{1-q^{2\mu n}}{(1-q^{2n})(1+q^{2\mu n})(1-q^{2(2n-1)})}\\ \nonumber & \quad \times \frac{(1-q^{2\ell n})(1+q^{2\ell\mu n})(1-q^{2\ell(2n-1)})}{1-q^{2\ell\mu n}} \\  \nonumber
&=  {\displaystyle\sum\limits_{n=0}^{\infty}}B_{\ell,\mu}(2 n) q^{2 n}.
\end{align}

Next, we have  
\begin{align}
{\displaystyle\sum\limits_{n=0}^{\infty}}E_{\ell,\mu}(2 n) q^{2 n} \nonumber
&= {\displaystyle\prod\limits_{n=1}^{\infty}} \frac{(1-q^{2\ell n})(1-q^{2\ell(2n-1)}) }{(1-q^{2n})(1-q^{2(2n-1)}) }\times \frac{(1-q^{2\mu n})}{(1-q^{2\ell\mu n})}\times \frac{1+q^{2\ell \mu n}}{1+q^2\mu n}\\  \nonumber
&=  {\displaystyle\prod\limits_{n=1}^{\infty}} \frac{(1+q^{2n})(1-q^{2\ell n})(1-q^{2\mu n})(1+q^{2\ell \mu n}) }{(1-q^{2n})(1+q^{2\ell n})(1+q^{2\mu n})(1-q^{2\ell\mu n}) }.
\end{align}
Then replacing $q^2$ by $q$ gives \eqref{eq-gf}.

Lastly, 
\begin{align}
{\displaystyle\sum\limits_{n=0}^{\infty}}F_{\ell,\mu}(\ell n) q^{\ell n} \nonumber
&= {\displaystyle\prod\limits_{n=1}^{\infty}} \frac{( 1+q^{\ell(\ell n-1)})\dots( 1+q^{\ell(\ell n-(\ell-1))}) (1-q^{\ell^2 n})(1-q^{\mu\ell n}) }{( 1+q^{\mu\ell(\ell n-1)})\dots( 1+q^{\mu\ell(\ell n-(\ell-1))}) (1-q^{\ell n})(1-q^{\mu \ell^2 n}) }\\  \nonumber
&=  {\displaystyle\prod\limits_{n=1}^{\infty}} \frac{(1+q^{\ell n})(1+q^{\mu \ell^2 n}) (1-q^{\ell^2 n})(1-q^{\mu\ell n}) }{(1+q^{\mu \ell n})(1+q^{\ell^2 n}) (1-q^{\ell n})(1-q^{\mu \ell^2 n}) }.
\end{align}
Then replacing $q^\ell$ by $q$ yields \eqref{eq-gf} as desired. This completes the proof.
\end{proof}

\section{Congruence Properties via Analytic \& Combinatorial Techniques}\label{sec:three}
Before stating and proving our resutls, we state the following lemmas, the proofs of which may be found in \cite{MunagiSellers2013}. We recall the definition of Ramanujan's theta function
$$ \varphi(q): = \sum_{n=-\infty}^\infty q^{n^2} = \prod_{n=1}^{\infty} (1+q^{2n-1})^2(1-q^{2n}).$$

\begin{lemma} \label{jas_lemma2} We have
$$\varphi(-q^2)^2 = \varphi(q)\varphi(-q).$$
\end{lemma}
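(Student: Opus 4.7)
The plan is to derive both sides as infinite products using only the product representation of $\varphi(q)$ given in the excerpt, and then verify that the products coincide after a routine rearrangement. This is a classical identity whose shortest proof is entirely through the product form rather than through the theta series.

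First, I would start from $\varphi(q)=\prod_{n=1}^{\infty}(1+q^{2n-1})^2(1-q^{2n})$ and substitute $-q$ for $q$ to obtain
\[
\varphi(-q)=\prod_{n=1}^{\infty}(1-q^{2n-1})^2(1-q^{2n}).
\]
Multiplying the two expressions and applying the difference of squares $(1+q^{2n-1})(1-q^{2n-1})=1-q^{2(2n-1)}$ to each factor yields
\[
\varphi(q)\varphi(-q)=\prod_{n=1}^{\infty}(1-q^{2(2n-1)})^2(1-q^{2n})^2.
\]

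Next, I would split the second factor over the parity of the index: every even integer $2n$ is either of the form $2(2k-1)$ (when $n$ is odd) or $4k$ (when $n$ is even), so
\[
\prod_{n=1}^{\infty}(1-q^{2n})^2=\prod_{k=1}^{\infty}(1-q^{2(2k-1)})^2\prod_{k=1}^{\infty}(1-q^{4k})^2.
\]
Inserting this into the previous display gives
\[
\varphi(q)\varphi(-q)=\prod_{n=1}^{\infty}(1-q^{2(2n-1)})^{4}(1-q^{4n})^{2}.
\]

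Finally, I would evaluate $\varphi(-q^2)$ directly from the product formula by substituting $-q^2$ for $q$, which produces $\varphi(-q^2)=\prod_{n=1}^{\infty}(1-q^{2(2n-1)})^2(1-q^{4n})$. Squaring this matches the product obtained for $\varphi(q)\varphi(-q)$ exactly, establishing the lemma. There is no real obstacle here; the only thing to be careful about is the bookkeeping in the parity split of $\prod(1-q^{2n})$, which is the one step where a miscount would ruin the matching of exponents.
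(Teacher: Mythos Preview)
Your argument is correct: the product form of $\varphi$, the difference-of-squares step, the parity split of $\prod_{n\ge1}(1-q^{2n})^2$, and the evaluation of $\varphi(-q^2)$ are all carried out accurately, and the two products match exactly.

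Note, however, that the paper does not actually supply its own proof of this lemma; it simply states the result and refers the reader to \cite{MunagiSellers2013} for the proof. So there is no in-paper argument to compare yours against. What you have written is a clean, self-contained derivation using only the product representation already recalled in the text, which is in fact more than the paper provides here.
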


\begin{lemma}\label{jas_lemma4} We have
$$\frac{1}{\varphi(-q)} = \varphi(q)\varphi(q^2)^2\varphi(q^4)^4 \cdots$$
\end{lemma}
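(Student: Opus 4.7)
The plan is to derive the identity by iterating Lemma \ref{jas_lemma2}. Rewriting that lemma as
$$\frac{1}{\varphi(-q)} \;=\; \frac{\varphi(q)}{\varphi(-q^2)^2},$$
isolates $\varphi(-q)^{-1}$ as a product of one factor of the target form, namely $\varphi(q)$, times a correction term $\varphi(-q^2)^{-2}$ that is of the same shape as the left-hand side after the substitution $q \mapsto q^2$ and squaring.

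Next, I apply Lemma \ref{jas_lemma2} with $q$ replaced by $q^2$ and then raise both sides to the second power to get
$$\frac{1}{\varphi(-q^2)^2} \;=\; \frac{\varphi(q^2)^2}{\varphi(-q^4)^4}.$$
Substituting this back gives $\varphi(-q)^{-1} = \varphi(q)\,\varphi(q^2)^2 / \varphi(-q^4)^4$. Iterating $k$ times (at the $j$-th stage using the identity $\varphi(-q^{2^j})^{-2^j} = \varphi(q^{2^j})^{2^j}/\varphi(-q^{2^{j+1}})^{2^{j+1}}$, which follows from Lemma \ref{jas_lemma2} at $q^{2^j}$ raised to the $2^j$-th power) produces the telescoping finite identity
$$\frac{1}{\varphi(-q)} \;=\; \left(\prod_{j=0}^{k-1} \varphi(q^{2^j})^{2^j}\right) \cdot \frac{1}{\varphi(-q^{2^k})^{2^k}}.$$

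Finally, I pass to the limit $k \to \infty$ inside the formal power series ring $\mathbb{Z}[[q]]$. From the product representation $\varphi(-q) = \prod_{n\geq 1}(1-q^{2n-1})^2(1-q^{2n})$ we see that $\varphi(-q^{2^k}) - 1 \in q^{2^k}\mathbb{Z}[[q]]$, and consequently $\varphi(-q^{2^k})^{2^k}$ also differs from $1$ only in coefficients of degree $\geq 2^k$. Hence, for every fixed $N$, the tail factor $\varphi(-q^{2^k})^{-2^k}$ equals $1$ modulo $q^{N+1}$ once $k$ is large enough, so the infinite product $\prod_{j\geq 0}\varphi(q^{2^j})^{2^j}$ converges $q$-adically to $\varphi(-q)^{-1}$, which is exactly the claim.

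I do not expect any real obstacle here: the recursion from Lemma \ref{jas_lemma2} does all the work, and the only fussy point is the convergence step, which is immediate once one records the $q^{2^k}$-adic valuation of $\varphi(-q^{2^k}) - 1$.
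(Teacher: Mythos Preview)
Your proof is correct. The paper does not actually prove this lemma at all: it merely states it and refers to \cite{MunagiSellers2013} for a proof. Your iteration of Lemma~\ref{jas_lemma2} together with the $q$-adic convergence argument is exactly the standard route to this identity, and all steps (the telescoping, the valuation bound $\varphi(-q^{2^k})-1\in q^{2^k}\mathbb{Z}[[q]]$, and hence the vanishing of the tail factor modulo any fixed power of $q$) are sound.
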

 We can rewrite the generating function of $\overline{R_{\ell,\mu}}(n)$ in terms of Ramanujan's theta function $\varphi(q)$ as follows
\begin{equation}\label{genfn_iterated}
\sum_{n\geq 0} \overline{R_{\ell,\mu}}(n) q^n = \frac{\varphi(-q^\ell)\varphi(-q^\mu)}{\varphi(-q)\varphi(-q^{\ell\mu})}.
\end{equation}
Incorporating the results of Lemmas \ref{jas_lemma2} and \ref{jas_lemma4}, we can re-write
\begin{equation}\label{genfn_iteratedx}
\sum_{n\geq 0} \overline{R_{\ell,\mu}}(n) q^n = \frac{\varphi(q)\varphi(q^2)^2\varphi(q^4)^4\cdots\varphi(q^{\ell\mu})\varphi(q^{2\ell\mu})^2\varphi(q^{4\ell\mu})^4\cdots }{\varphi(q^\ell)\varphi(q^{2\ell})^2\varphi(q^{4\ell})^4\cdots \varphi(q^\mu)\varphi(q^{2\mu})^2\varphi(q^{4\mu})^4\cdots }.
\end{equation}
The following easy corollary now follows.
\begin{corollary}\label{cor1}
For all $n\geq 1$, we have $\overline{R_{\ell,\mu}}(n) \equiv 0 \pmod{2}.$
\end{corollary}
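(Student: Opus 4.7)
The plan is to reduce the iterated product form \eqref{genfn_iteratedx} modulo $2$. First I would note that from the series definition
\[
\varphi(q) = \sum_{n=-\infty}^{\infty} q^{n^2} = 1 + 2\sum_{n\geq 1} q^{n^2},
\]
we immediately have $\varphi(q) \equiv 1 \pmod{2}$, and consequently $\varphi(q^k) \equiv 1 \pmod{2}$ for every positive integer $k$. Raising to any positive power preserves this, so every factor $\varphi(q^{2^j r})^{2^j}$ appearing in the numerator and denominator of \eqref{genfn_iteratedx} is congruent to $1$ modulo $2$.

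Next I would apply this observation to \eqref{genfn_iteratedx} factor-by-factor. Both the numerator and the denominator then reduce to the constant series $1$ modulo $2$. Since the denominator has constant term $1$, it is invertible in $\mathbb{Z}_{(2)}[[q]]$, and the quotient is therefore $\equiv 1 \pmod{2}$. Comparing coefficients of $q^n$ on both sides of
\[
\sum_{n\geq 0} \overline{R_{\ell,\mu}}(n) q^n \equiv 1 \pmod{2}
\]
gives $\overline{R_{\ell,\mu}}(0)=1$ and $\overline{R_{\ell,\mu}}(n)\equiv 0 \pmod{2}$ for all $n\geq 1$, as claimed.

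The only point that needs a word of care, and the closest thing to an obstacle, is the manipulation of the infinite products in \eqref{genfn_iteratedx}: one must view the equalities in the $q$-adic topology on $\mathbb{Z}[[q]]$. Since $\varphi(q^{2^j r})^{2^j} - 1$ has no terms of degree less than $2^j r$, only finitely many factors contribute to any fixed coefficient, so termwise reduction modulo $2$ of the infinite products is rigorous. Alternatively, one may run the same argument directly on the more compact form \eqref{genfn_iterated}, using $\varphi(-q^k) = 1 + 2\sum_{n\geq 1}(-1)^{n^2}q^{kn^2} \equiv 1 \pmod{2}$, which sidesteps the infinite-product issue entirely.
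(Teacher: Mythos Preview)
Your proof is correct and follows essentially the same approach as the paper's analytic proof: both observe that $\varphi(q)\equiv 1\pmod 2$ and apply this to the expression \eqref{genfn_iteratedx} to conclude that the generating function is $\equiv 1\pmod 2$. Your version is simply more detailed about the justification of the infinite-product manipulation and the inversion of the denominator, points the paper leaves implicit.
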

\begin{proof}
Since $\varphi(q) = 1+2\sum_{n\geq 1}q^{n^2},$ we know that $\varphi(q) \equiv 1\pmod{2}.$ So \eqref{genfn_iteratedx} gives us
\[
\sum_{n\geq 0}\overline{R_{\ell,\mu}}(n)q^n\equiv 1\pmod 2.
\]
This gives an analytic proof.
\end{proof}
 
We give a combinatorial proof as well. 

\begin{proof}[Combinatorial Proof of Corollary \ref{cor1}]
We can obtain an overpartition by overlining the first occurrence of any distinct part of an ordinary partition $\lambda=(c_1^{u_1}, c_2^{u_2}, \ldots,c_r^{u_r})$ with $c_1>c_2>\cdots>c_r$. Since we may choose to either overline a part or not, the number of overpartitions obtainable from $\lambda$ is $\overline{p}(\lambda) := 2^r$. Thus for all $n\geq 1,\, \overline{p}(n) \equiv 0 \pmod{2}$. 
Analogously, if we consider only partitions $\lambda$ that are simultaneously $\ell$- and $\mu$-regular, we would obtain an even number of $(\ell,\mu)$-regular overpartitions. Thus For all $n\geq 1,$ we have $\overline{R}_{\ell,\mu}(n) \equiv 0 \pmod{2}.$
\end{proof}

We now give a complete modulo $4$ characterization of $\overline{R_{\ell,\mu}}(n)$.
\begin{theorem}\label{cor2}
For all $n\geq 1$,
\begin{itemize}
  \item[(i)] if $\ell$ is a square and $\mu$ is not, then
  $$
\overline{R_{\ell,\mu}}(n) \equiv
\begin{cases}
2 \pmod{4}&\mbox{if } n = k^2 \mbox{ or } n =\mu k^2 , \mbox{\ where $\ell$ and $k$ are relatively prime}; \\
0 \pmod{4} & \mbox{otherwise, }
\end{cases} $$
\item[(ii)] if $\ell$ and $\mu$ are both squares, then
  $$ \overline{R_{\ell,\mu}}(n) \equiv
\begin{cases}
2 \pmod{4}&\mbox{if } n = k^2, \mbox{\ where $k$, $\mu$ and $\ell$ are relatively prime}; \\
0 \pmod{4} & \mbox{otherwise, }
\end{cases} $$
  \item[(iii)] { if neither $\ell$ nor $\mu$ is a square }, then
  $$
\overline{R_{\ell,\mu}}(n) \equiv
\begin{cases}
2 \pmod{4}&\mbox{if } n = k^2 \mbox{ , } n =\ell k^2 \mbox{ , } n =\mu k^2 \mbox{ or } n =\ell\mu k^2; \\
0 \pmod{4} & \mbox{otherwise. }
\end{cases} $$
\end{itemize}
\end{theorem}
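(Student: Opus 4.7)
The plan is to reduce the generating function modulo $4$ using the theta-function form in \eqref{genfn_iteratedx} and then read off which coefficients land on $2$ rather than $0$. The two essential facts I would use are $\varphi(-q^{d}) \equiv \varphi(q^{d}) \pmod 4$ (immediate from $\varphi(q) - \varphi(-q) = 4\sum_{n\ge 0} q^{(2n+1)^2}$) and the induction-based identity $\varphi(q^d)^{2^j} \equiv 1 \pmod{2^{j+1}}$ for $j \ge 1$, which follows from $\varphi(q) = 1 + 2\sum_{k\ge 1} q^{k^2}$. Applying these to \eqref{genfn_iteratedx}, every factor $\varphi(q^{2^j d})^{2^j}$ with $j \ge 1$ in both numerator and denominator is $\equiv 1 \pmod 4$, so the infinite product collapses to
\[
\sum_{n \ge 0} \overline{R_{\ell,\mu}}(n)\, q^n \equiv \frac{\varphi(q)\,\varphi(q^{\ell\mu})}{\varphi(q^\ell)\,\varphi(q^\mu)} \pmod 4.
\]

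Next I would write $T(q) := \sum_{k\ge 1} q^{k^2}$ so that $\varphi(q^d) = 1 + 2T(q^d)$, and use the identity $(1+2U)^{-1} \equiv 1 + 2U \pmod 4$ for any $U \in q\,\mathbb{Z}[[q]]$ (valid because $(1+2U)(1+2U) = 1 + 4U + 4U^2 \equiv 1 \pmod 4$). Cross-multiplying and discarding the vanishing $4U_1 U_2$ cross terms yields the clean expression
\[
\sum_{n \ge 0} \overline{R_{\ell,\mu}}(n)\, q^n \equiv 1 + 2\bigl(T(q) + T(q^\ell) + T(q^\mu) + T(q^{\ell\mu})\bigr) \pmod 4.
\]
For $n \ge 1$, let $r(n)$ denote the number of pairs $(c,k)$ with $c \in \{1,\ell,\mu,\ell\mu\}$ and $k \ge 1$ satisfying $n = c k^2$. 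Then $\overline{R_{\ell,\mu}}(n) \equiv 2r(n) \pmod 4$, so the task reduces to deciding when $r(n)$ is odd.

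The remaining step is a case split according to the square-ness of $\ell$ and $\mu$. Two representations $c_1 k_1^2 = c_2 k_2^2$ coincide iff $c_1/c_2$ is a rational square; combined with $\gcd(\ell,\mu)=1$, this means all four buckets $\{1,\ell,\mu,\ell\mu\}\times\{k^2:k\ge 1\}$ are pairwise disjoint precisely when neither $\ell$ nor $\mu$ is a square (note $\ell\mu$ is a square iff both are, since they are coprime). This disposes of (iii). For (i), with $\ell = s^2$ but $\mu$ non-square, the only collisions are $k^2 = \ell (k/s)^2$ (possible iff $s\mid k$) and the analogous collision $\mu k^2 = \ell\mu(k/s)^2$; squares and $\mu$-times-squares never coincide. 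For (ii), with $\ell=s^2$ and $\mu=t^2$, every representation is a perfect square, and for $n=m^2$ the count factors as $r(n) = (1 + [s\mid m])(1 + [t\mid m])$, which is odd iff $s\nmid m$ and $t\nmid m$.

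The main obstacle is not the theta-function manipulation, which is a short and mechanical calculation, but the bookkeeping in this final case analysis: one must correctly match the overlap conditions to the specific lists displayed in (i)--(iii) and verify that the natural ``$\sqrt{\ell}\nmid k$''-type parities coming out of the generating-function reduction line up with the coprimality conditions stated in the theorem. I would carry this out by introducing auxiliary variables ($m = k/\sqrt{\ell}$, etc.) so that the odd-parity fibers can be read off directly in the form required by the statement.
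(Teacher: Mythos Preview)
Your approach is essentially the paper's analytic proof: both collapse \eqref{genfn_iteratedx} to $\varphi(q)\varphi(q^{\ell\mu})/(\varphi(q^\ell)\varphi(q^\mu))\pmod 4$, eliminate the denominator (the paper via Lemma~\ref{jas_lemma2}, you via $(1+2U)^{-1}\equiv 1+2U\pmod 4$, which is equivalent), and arrive at $1+2\sum q^{n^2}+2\sum q^{\ell n^2}+2\sum q^{\mu n^2}+2\sum q^{\ell\mu n^2}\pmod 4$ before doing the case analysis. Your caution about the final bookkeeping is well placed: the parity condition that actually falls out is ``$\sqrt{\ell}\nmid k$'' rather than ``$\gcd(\ell,k)=1$'', and these differ when $\sqrt{\ell}$ is composite, so the statement as printed is slightly imprecise---but this is an issue with the formulation, not with your method, which matches the paper's.
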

\begin{proof}[Analytic Proof of Theorem \ref{cor2}]
Using \eqref{genfn_iteratedx} we have
$$\sum_{n\geq 0} \overline{R_{\ell,\mu}}(n) q^n \equiv \frac{\varphi(q)\varphi(q^{\ell\mu})}{\varphi(q^\ell)\varphi(q^\mu)} \pmod{4},$$
since $\varphi(q^i)^j \equiv 1 \pmod{4}$ for any $j\geq 2.$  Next, we deduce from Lemma \ref{jas_lemma2}, that
$$ \varphi(q) = \frac{\varphi(-q^2)^2}{\varphi(-q)}.$$
Thus
\begin{eqnarray*}
\overline{R_{\ell,\mu}}(n) q^n &\equiv &
\frac{\varphi(q)\varphi(q^{\ell\mu})}{\varphi(q^\ell)\varphi(q^\mu)} \equiv 
\frac{\varphi(q)\varphi(q^\ell\mu)\varphi(-q^\ell)\varphi(-q^\mu)}{\varphi(-q^{2\ell})^2\varphi(-q^{2\mu})^2} \pmod{4} \\
&\equiv &
\varphi(q)\varphi(q^\ell\mu)\varphi(-q^\ell)\varphi(-q^\mu) \pmod{4},
\end{eqnarray*}
since $\varphi(-q^{2\ell})^2 \equiv 1 \pmod{4}.$

Hence,
\begin{eqnarray*}
\sum_{n\geq 0} \overline{R_{\ell,\mu}}(n) q^n
&\equiv &
\varphi(q)\varphi(q^\ell\mu)\varphi(-q^\ell)\varphi(-q^\mu) \pmod{4} \\
& = &
(1+2\sum_{n\geq 1}q^{n^2})(1+2\sum_{n\geq 1}(q^{\ell\mu})^{n^2})(1+2\sum_{n\geq 1}(-q^\ell)^{n^2})(1+2\sum_{n\geq 1}(-q^\mu)^{n^2})\\
&\equiv &
1+2\sum_{n\geq 1}q^{n^2}+2\sum_{n\geq 1}(q^{\ell\mu})^{n^2}+2\sum_{n\geq 1}(-q^\ell)^{n^2}+2\sum_{n\geq 1}(-q^\mu)^{n^2} \pmod{4}\\
&\equiv &
1+2\sum_{n\geq 1}q^{n^2}+2\sum_{n\geq 1}q^{\ell\mu{n^2}}+2\sum_{n\geq 1}q^{\ell{n^2}}+2\sum_{n\geq 1}q^{\mu{n^2}} \pmod{4}.
\end{eqnarray*}
A straightforward interpretation of the last congruence gives the three results as required.
\end{proof}
 
We give a combinatorial proof as well. 

\begin{proof}[Combinatorial Proof of Theorem \ref{cor2}]
Let $m(\ell|n)$ be the number of multiples $\ell$ dividing $n$ and define $\Delta (n,\ell,\mu): = \tau(n)-(m(\ell|n)+m(\mu|n)-m(\ell\mu|n))$, where $\tau(n)$ is the number of divisors of $n$. We claim that 
\begin{equation}\label{claimxy}
\overline{R_{\ell,\mu}}(n) \equiv
\begin{cases}
2 \pmod{4}&\mbox{if } \Delta (n,\ell,\mu)\, \mbox{ is odd}; \\
0 \pmod{4} & \mbox{otherwise}.
\end{cases}
\end{equation}

We decompose $(\ell,\mu)$-regular overpartitions into two classes: those containing a unique part-size and those containing two or more different part-sizes.  By the proof of Corollary \ref{cor1} the latter class has cardinality of the form $m2^r, m>0,r>1$ which is divisble by 4. However, partitions with a single part-size arise from divisors of $n$. Each divisor $d$ of $n$ (excluding $\ell, \mu$ and $\ell\mu$) gives the ordinary partition $(d^{n/d})$ which in turn produces two $(\ell,\mu)$-regular overpartitions. 
Thus $\Delta (n,\ell,\mu)\equiv 1$ (mod 2) if and only if divisors of $n$ contribute an odd number of pairs of $(\ell,\mu)$-regular overpartitions.
Hence \eqref{claimxy} follows.

We will use the easily proved relation $m(\ell|n) = \tau(n/\ell)$.

We consider part (i). In view \eqref{claimxy} it will suffice to find the parity of $\Delta (n,\ell,\mu)$ under each constraint.
If $n=k^2$ and $\ell\nmid n$, then $\tau(n)$ is odd and $m(\ell|n)=0$. Since both $m(\mu|n)$ and $m(\ell\mu|n)$ are even, it follows that $\Delta (n,\ell,\mu)$ is odd. 
Similarly, the case $\mu\nmid n$ implies that $\Delta (n,\ell,\mu)$ is odd. 
If $n=k^2$ and $\mu\mid n$, then $m(\mu|n)$ is even (since $\mu$ is not a square). Hence $\Delta (n,\ell,\mu)$ is odd.
But if $n=\mu k^2$, then $\tau(n)$ is even and $m(\mu|n) = \tau(k^2)$ which is odd.
So $\Delta (n,\ell,\mu)$ is odd.
The proof of the first line of part (i) is complete. 

For the second line we consider the the following negations, given that $\ell$ is a square and $\mu$ is not: (a) $n=k^2$ and $\ell\mid n$, and (b) $n\neq k^2$ and $n\neq \mu k^2$.
In (a) we find that both $\tau(n)$ and $m(\ell|n)$ are odd. Since $m(\mu|n)$ and $m(\ell\mu|n)$ are clearly even, it follows that $\Delta (n,\ell,\mu)$ is even. In (b) it is clear that all the relevant functions are even. This completes the proof of part (i).

The other parts may be proved in a similar manner. For example, the first line of part (iii) may be established by noting that exactly one member of the set  $\{\tau(n),m(\ell|n),m(\mu|n),m(\ell\mu|n)\}$ is odd at a time with all the others being even.
\end{proof}

We close this section by proving a general mod $8$ congruence for the $\overline{R}_\ell^\ast(n)$ function. We will need the following lemma.
\begin{lemma}\label{lemma1}
    We have
    \[
    \sum_{n\geq 0}\overline{R^\ast_6}(3n+2)q^n\equiv 4f_6^3\pmod4.
    \]
\end{lemma}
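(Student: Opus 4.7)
The plan is to begin from the generating function identity
\[
\sum_{n\ge 0}\overline{R_6^\ast}(n)q^n=\frac{f_2f_6}{f_1^2}=\frac{f_6}{\varphi(-q)},
\]
using $\varphi(-q)=f_1^2/f_2$, and then to $3$-dissect $1/\varphi(-q)$ modulo $8$ in order to isolate the subseries of index $\equiv2\pmod3$.

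Ramanujan's $3$-dissection of $\varphi(-q)$ reads
\[
\varphi(-q)=\varphi(-q^9)-2q\,f(-q^3,-q^{15}),
\]
where $f(a,b)=\sum_n a^{n(n+1)/2}b^{n(n-1)/2}$ is Ramanujan's general theta function; this follows by splitting the defining sum $\sum_n(-1)^nq^{n^2}$ according to $n\bmod 3$. Setting $\Psi:=f(-q^3,-q^{15})/\varphi(-q^9)$ and expanding geometrically,
\[
\frac{1}{\varphi(-q)}=\frac{1}{\varphi(-q^9)(1-2q\Psi)}\equiv\frac{1}{\varphi(-q^9)}\bigl(1+2q\Psi+4q^2\Psi^2\bigr)\pmod{8},
\]
since $8q^3\Psi^3\equiv0\pmod8$. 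Multiplying by $f_6$ and observing that $f_6$, $\varphi(-q^9)$ and $f(-q^3,-q^{15})$ are each supported on exponents divisible by $3$, the three summands contribute to indices $\equiv 0,1,2\pmod3$ respectively, so only the $4q^2$-term survives in the subseries of interest:
\[
\sum_{n\ge 0}\overline{R_6^\ast}(3n+2)q^{3n+2}\equiv 4q^2\,\frac{f_6\,f(-q^3,-q^{15})^2}{\varphi(-q^9)^3}\pmod{8}.
\]
Dividing through by $q^2$ and substituting $q^3\mapsto q$ (valid since all remaining exponents are multiples of $3$) yields an explicit eta-quotient expression for $\sum_{n\ge 0}\overline{R_6^\ast}(3n+2)q^n$.

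To identify this quotient with $f_6^3$ modulo $2$, I would apply the Jacobi triple product to obtain $f(-q,-q^5)=f_1f_6^2/(f_2f_3)$ together with $\varphi(-q^3)=f_3^2/f_6$. The quotient then simplifies to
\[
\frac{f_2\,f(-q,-q^5)^2}{\varphi(-q^3)^3}=\frac{f_1^2\,f_6^7}{f_2\,f_3^8},
\]
and modulo $2$ this collapses to $f_6^3$ via the elementary congruences $f_1^2\equiv f_2\pmod2$ and $f_3^8\equiv f_6^4\pmod2$. Multiplying back by $4$ gives the right-hand side $4f_6^3$ modulo $8$, which in particular implies the stated mod~$4$ congruence.

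The main obstacle is the final mod-$2$ reduction of $f_1^2f_6^7/(f_2f_3^8)$ to $f_6^3$: the Jacobi-triple-product evaluation of $f(-q,-q^5)$ is the essential input, and the iterated applications of $(1-q^n)^{2k}\equiv(1-q^{2n})^k\pmod2$ must be sequenced correctly. The bookkeeping in the $3$-dissection, together with the careful tracking of exponents under $q^3\mapsto q$, also requires attention.
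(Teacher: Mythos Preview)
Your argument is correct and in fact establishes the congruence modulo $8$, which is what the subsequent theorem actually requires (the paper's ``$\pmod 4$'' in both the lemma statement and its proof is almost certainly a typo, since $4f_6^3\equiv 0\pmod 4$ makes the assertion vacuous, and the application immediately afterwards invokes the result modulo $8$).

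Your route differs from the paper's. The paper simply quotes the exact $3$-dissection of $f_2/f_1^2$ from Hirschhorn--Sellers,
\[
\frac{f_2}{f_1^2}=\frac{f_6^4f_9^6}{f_3^8f_{18}^3}+2q\frac{f_6^3f_9^3}{f_3^7}+4q^2\frac{f_6^2f_{18}^3}{f_3^6},
\]
multiplies by $f_6$, extracts the $q^{3n+2}$ terms to obtain the exact identity $\sum_n\overline{R_6^\ast}(3n+2)q^n=4f_2^3f_6^3/f_1^6$, and then reduces via $f_1^6\equiv f_2^3\pmod 2$. You instead rebuild that dissection from first principles: starting from $\varphi(-q)=\varphi(-q^9)-2qf(-q^3,-q^{15})$, you expand $1/(1-2q\Psi)$ as a geometric series truncated modulo $8$, pick off the residue class $2\bmod 3$, and then express $f(-q,-q^5)$ and $\varphi(-q^3)$ as eta quotients to reach $4f_1^2f_6^7/(f_2f_3^8)\equiv 4f_6^3\pmod 8$. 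The paper's approach is shorter because it outsources the dissection to a reference and yields an exact (not merely mod $8$) expression; yours is self-contained and avoids citing the Hirschhorn--Sellers identity, at the cost of the extra Jacobi-triple-product bookkeeping you flag at the end. Both approaches finish with the same elementary $f_k^{2}\equiv f_{2k}\pmod 2$ reductions.
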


\begin{proof}
   We need the following identity \cite[Theorem 1]{SellersOver}
    \begin{equation}\label{eq:iden-sellers}
        \frac{f_2}{f_1^2}=\frac{f_6^4f_9^6}{f_3^8f_{18}^3}+2q\frac{f_6^3f_9^3}{f_3^7}+4q^2\frac{f_6^2f_{18}^3}{f_3^6}.
    \end{equation}
    Using \eqref{eq:iden-sellers} in \eqref{eq:gf-rast} we have the following
    \begin{align*}
        \sum_{n\geq 0}\overline{R^\ast_6}(3n+2)q^n&=4\frac{f_2^3f_6^3}{f_1^6}\equiv 4\frac{f_1^6f_6^3}{f_1^6} \equiv 4f_6^3 \pmod 4.
    \end{align*}
\end{proof}

\begin{theorem}
    Let $p\geq 5$ be a prime and let $r$ with $1\leq r\leq p-1$ be such that $\textup{inv}(3,p)\cdot 4\cdot r+1$ is a quadratic nonresidue modulo $p$, where $\textup{inv}(3,p)$ is the inverse of $3$ modulo $p$. Then, for all $n\geq 0$ we have $\overline{R^\ast_6}(3(pn+r)+2)\equiv 0 \pmod 8$.
\end{theorem}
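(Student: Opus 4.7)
The starting point is Lemma~\ref{lemma1}; although it is stated modulo $4$, the chain of congruences in its proof actually yields the stronger statement modulo $8$, since $f_2\equiv f_1^2\pmod 2$ gives $f_2^3 - f_1^6\equiv 0\pmod 2$, and multiplying by $4$ upgrades this to $4(f_2^3-f_1^6)\equiv 0\pmod 8$. Reading the lemma's identity modulo $8$ therefore gives
$$\sum_{n\geq 0}\overline{R_6^\ast}(3n+2)q^n \equiv 4f_6^3 \pmod{8},$$
so the theorem reduces to showing that the coefficient of $q^{pn+r}$ in $f_6^3$ vanishes for every $n\geq 0$.

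To read off that coefficient I would invoke Jacobi's classical identity $f_1^3=\sum_{m\geq 0}(-1)^m(2m+1)q^{m(m+1)/2}$; substituting $q\mapsto q^6$ yields
$$f_6^3 = \sum_{m\geq 0}(-1)^m(2m+1)q^{3m(m+1)}.$$
Since $m\mapsto 3m(m+1)$ is strictly increasing on $m\geq 0$, each exponent appears at most once in this expansion, so $[q^N]f_6^3\neq 0$ forces $N=3m(m+1)$ for some nonnegative integer $m$. The problem therefore reduces to deciding for which residues $r$ modulo $p$ the congruence $3m(m+1)\equiv r\pmod p$ is solvable.

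Completing the square in $3m^2+3m\equiv r\pmod p$, and multiplying by $4\cdot\textup{inv}(3,p)$ (legitimate because $p\geq 5$), the congruence becomes
$$(2m+1)^2 \equiv 4\cdot\textup{inv}(3,p)\cdot r + 1 \pmod p.$$
Under the hypothesis that $4\cdot\textup{inv}(3,p)\cdot r+1$ is a quadratic nonresidue modulo $p$, this has no solution; hence $3m(m+1)\not\equiv r\pmod p$ for every integer $m$, so $[q^{pn+r}]f_6^3=0$ for all $n\geq 0$, and the theorem follows. The only mildly delicate point is sharpening Lemma~\ref{lemma1} from modulo $4$ to modulo $8$; after that the argument is essentially a one-line quadratic-residue dissection of $f_6^3$.
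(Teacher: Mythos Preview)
Your argument is correct and follows the same route as the paper: upgrade Lemma~\ref{lemma1} to a congruence modulo $8$ (the paper uses this upgrade tacitly), expand $f_6^3$ via Jacobi's identity as $\sum_{m\ge0}(-1)^m(2m+1)q^{3m(m+1)}$, and then show that $3m(m+1)\equiv r\pmod p$ has no solution by completing the square to $(2m+1)^2\equiv 4\cdot\textup{inv}(3,p)\cdot r+1\pmod p$. Your explicit justification that $f_2^3/f_1^6\equiv 1\pmod 2$ (hence the factor $4$ lifts the lemma to modulus $8$) is a point the paper glosses over, so your write-up is in fact slightly more careful there.
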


\begin{proof}
We need Jacobi's triple product identity
\begin{equation}\label{eq:jacobi}
    f_1^3=\sum_{j=0}^\infty (-1)^j(2j+1)q^{j(j+1)/2}.
\end{equation}
From Lemma \ref{lemma1} and \eqref{eq:jacobi} we have
\[
\sum_{n\geq 0}\overline{R^\ast_6}(3n+2)q^n\equiv 4\sum_{j=0}^\infty (-1)^j(2j+1)q^{3j(j+1)} \pmod 8.
\]
We are interested in values of the form $\overline{R^\ast_6}(3(pn+r)+2)$, and we want to know whether we can have $pn+r=3j(j+1)$, for some nonnegative integer $j$. If such a representation for $pn+r$ exists, then $r\equiv 3j(j+1) \pmod p$. Since $p\geq 3$, so this is equivalent to $\textup{inv}(3,p)\cdot r\equiv j(j+1) \pmod p$, which in turn is equivalent to $\textup{inv}(3,p)\cdot 4\cdot r +1 \equiv (2j+1)^2 \pmod p$. From our assumption that $\textup{inv}(3,p)\cdot 4\cdot r+1$ is a quadratic nonresidue modulo $p$, the result now follows.
\end{proof}

\section{Congruence Properties via Modular Forms}\label{sec:four}
In this section, we will use the theory of modular forms to find several congruences. The first result is given below.

\begin{theorem}\label{thm:cong}
        For all $n\geq 0$, we have
    \begin{align}
         \overline{R_{2,3}}(9n+6)&\equiv 0 \pmod 6,\label{cong-1}\\
     \overline{R_{4,3}}(6n+3)&\equiv 0 \pmod 6,\\
     \overline{R_{4,3}}(6n+5)&\equiv 0 \pmod{12},\\
          \overline{R_{4,3}}(9n+3)&\equiv 0 \pmod 6,\\
    \overline{R_{4,3}}(12n+7)&\equiv 0 \pmod{24},\\
    \overline{R_{4,3}}(12n+11)&\equiv 0 \pmod{72},\\
    \overline{R_{4,9}}(8n+4)&\equiv 0 \pmod{12},\\
    \overline{R_{4,9}}(12n+4)&\equiv 0 \pmod{12},\label{cong-2}\\
    \overline{R_{4,9}}(12n+8)&\equiv 0 \pmod{72},\label{cong-3}\\
        \overline{R_{4,9}}(16n+8)&\equiv 0 \pmod{24},\\
    \overline{R_{4,9}}(18n+12)&\equiv 0 \pmod{96},\\
\overline{R_{4,9}}(24n+20)&\equiv 0 \pmod{216},\label{cong-6}\\
\overline{R_{4,9}}(18n+15)&\equiv 0 \pmod{48},\\
\overline{R_{4,9}}(96n+80)&\equiv 0 \pmod{864},\label{cong-7}\\
     \overline{R_{8,27}}(36n+15)&\equiv 0 \pmod{24},\\
    \overline{R_{8,27}}(36n+21)&\equiv 0 \pmod{96},\\
\overline{R_{8,27}}(36n+24)&\equiv 0 \pmod{12},\\
     \overline{R_{8,27}}(36n+27)&\equiv 0 \pmod{6},\\
 \overline{R_{16,81}}(36n+33)&\equiv 0 \pmod{48},\\
 \overline{R_{16,81}}(72n+60)&\equiv 0 \pmod{48} .
    \end{align}
\end{theorem}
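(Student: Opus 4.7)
The plan is to verify all twenty congruences uniformly using the machinery of modular forms and Radu's algorithm, which is the standard method for establishing Ramanujan-type congruences whose generating functions are eta-quotients.

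First, I would rewrite each of the generating functions for $\overline{R_{\ell,\mu}}(n)$ with $(\ell,\mu)\in\{(2,3),(4,3),(4,9),(8,27),(16,81)\}$ as an eta-quotient via \eqref{eq-gf} together with the substitution $f_k = q^{-k/24}\eta(k\tau)$. This realises $\sum_{n\ge 0}\overline{R_{\ell,\mu}}(n)q^n$, up to a rational power of $q$, as a weight-zero eta-quotient on $\Gamma_0(N)$, where $N$ is the least common multiple of the indices $\{1,2,\ell,2\ell,\mu,2\mu,\ell\mu,2\ell\mu\}$.

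Second, for each target congruence $\overline{R_{\ell,\mu}}(An+B)\equiv 0\pmod{M}$ I would execute Radu's algorithm with the eta-quotient from the previous step, the arithmetic-progression parameters $(A,B)$, and the modulus $M$ as input. The algorithm outputs a finite collection of eta-quotients $E_1(\tau),\dots,E_s(\tau)$ that span (modulo $M$) the modular-form space containing
\[
H_{A,B}(\tau) \;:=\; q^{\kappa}\sum_{n\ge 0}\overline{R_{\ell,\mu}}(An+B)\,q^n
\]
for a suitable offset $\kappa\in\mathbb{Q}$, together with a Sturm-type bound beyond which any identity between modular forms in this space is determined by finitely many Fourier coefficients.

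Third, I would solve a finite linear system to produce integers $c_1,\dots,c_s$ with
\[
H_{A,B}(\tau) \;\equiv\; \sum_{j=1}^{s} c_j\, E_j(\tau) \pmod{M},
\]
and verify the identity by matching the required number of initial $q$-expansion coefficients on both sides. Each listed congruence then follows immediately from the arithmetical observation that every coefficient $c_j$ in the explicit solution is divisible by $M$. This verification is purely computational and can be automated in a computer algebra system such as \emph{Mathematica} or \emph{SageMath}.

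The main obstacle will be computational rather than conceptual: the levels $N$ grow rapidly with $\ell$ and $\mu$, reaching the thousands for the $(8,27)$ and $(16,81)$ cases, so both the Sturm bounds and the dimensions of the relevant modular-form spaces become sizable. To keep the verifications tractable I would first exploit known dissection identities (such as $3$-, $4$- and $9$-dissections of $f_2/f_1^2$ analogous to \eqref{eq:iden-sellers}) to descend to a smaller subspace before invoking Radu's algorithm at the final stage; alternatively, one may apply Atkin--Lehner involutions to lower the effective level. Once these preliminary reductions are in place, the remaining checks are algorithmic and can be carried out congruence by congruence in the order listed.
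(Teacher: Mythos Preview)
Your proposal is correct and follows essentially the same route as the paper: both establish the congruences via Radu's algorithm for eta-quotients. The only real difference is presentational --- the paper invokes Smoot's \texttt{RaduRK} Mathematica package directly (which produces a prefactor $f_1(q)$ and expresses $f_1(q)\sum_n\overline{R_{\ell,\mu}}(An+B)q^n$ as a polynomial in a Hauptmodul $t$ with the modulus appearing as a common factor), whereas you describe the method more abstractly in terms of Sturm bounds and linear systems; your anticipated need for dissection identities or Atkin--Lehner tricks to tame the large-level cases turns out to be unnecessary, as the package handles all twenty congruences without such preprocessing.
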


\begin{theorem}\label{thm:cong2}
     For all $n\geq 0$, we have
    \begin{align}
       \overline{R_{3}^\ast}(9n+4)&\equiv 0 \pmod{12}\label{cong-21},\\
        \overline{R_{3}^\ast}(9n+7)&\equiv 0 \pmod{48}\label{cong-22},\\
    \overline{R_{6}^\ast}(9n+5)&\equiv 0 \pmod{24}\label{cong-23},\\
        \overline{R_{6}^\ast}(9n+8)&\equiv 0 \pmod{96}\label{cong-24}.
       \end{align}
\end{theorem}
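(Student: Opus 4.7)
The plan is to derive all four congruences uniformly by applying the Sellers 3-dissection identity~\eqref{eq:iden-sellers} twice in succession: first to dissect $\sum_{n\geq 0} \overline{R_3^\ast}(n)q^n$ and $\sum_{n\geq 0} \overline{R_6^\ast}(n)q^n$ by residue mod~$3$, and then to dissect the resulting subseries once more, thereby isolating the arithmetic progressions modulo~$9$.

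Set $A = f_6^4 f_9^6 / (f_3^8 f_{18}^3)$, $B = f_6^3 f_9^3 / f_3^7$, $C = f_6^2 f_{18}^3 / f_3^6$, so that \eqref{eq:iden-sellers} reads $f_2/f_1^2 = A + 2qB + 4q^2 C$; each of $A$, $B$, $C$, $f_3$, $f_6$ is a power series in $q^3$. From \eqref{eq:gf-rast}, one has $\sum \overline{R_3^\ast}(n)q^n = f_3 \cdot (f_2/f_1^2)$ and $\sum \overline{R_6^\ast}(n)q^n = f_6 \cdot (f_2/f_1^2)$. Since $9n+4, 9n+7 \equiv 1 \pmod 3$ and $9n+5, 9n+8 \equiv 2 \pmod 3$, I would extract the $q^{3k+1}$ subseries of the first and the $q^{3k+2}$ subseries of the second, obtaining after the substitution $q^3 \mapsto q$
\[
\sum_{n\geq 0}\overline{R_3^\ast}(3n+1)q^n = \frac{2 f_2^3 f_3^3}{f_1^6}, \qquad \sum_{n\geq 0}\overline{R_6^\ast}(3n+2)q^n = \frac{4 f_2^3 f_6^3}{f_1^6}
\]
(the latter recovering Lemma~\ref{lemma1}).

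Cubing \eqref{eq:iden-sellers} gives $f_2^3/f_1^6 = (A + 2qB + 4q^2 C)^3$. Expanding and regrouping by residue mod~$3$ yields
\begin{align*}
[q^{3k+1}]\,(A+2qB+4q^2C)^3 &= 6qA^2B + 48q^4(B^2C+AC^2),\\
[q^{3k+2}]\,(A+2qB+4q^2C)^3 &= 12q^2(A^2C + AB^2) + 96q^5 BC^2.
\end{align*}
A crucial observation, verified by direct eta-quotient multiplication, is that
\[
A^2 C \;=\; AB^2 \;=\; \frac{f_6^{10} f_9^{12}}{f_3^{22} f_{18}^3},
\]
which collapses the $q^{3k+2}$ bracket to $24q^2 A^2C + 96q^5 BC^2$ and thus doubles its leading coefficient. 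Multiplying the two brackets by $2f_3^3$ or $4f_6^3$, dividing by the leading power of $q$, and substituting $q^3 \mapsto q$ expresses the generating functions $\sum \overline{R_3^\ast}(9n+4)q^n$, $\sum \overline{R_3^\ast}(9n+7)q^n$, $\sum \overline{R_6^\ast}(9n+5)q^n$, $\sum \overline{R_6^\ast}(9n+8)q^n$ as integer combinations of eta-quotients with leading coefficients $12$, $48$, $24$, $96$ respectively, establishing \eqref{cong-21}--\eqref{cong-24}.

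The main obstacle is the careful bookkeeping of the cubic expansion and, more substantively, spotting the identity $A^2 C = AB^2$; without this collapse one would recover only the weaker moduli $24$ and $48$ in place of the sharp $48$ and $96$ in the two $q^{3k+2}$ cases.
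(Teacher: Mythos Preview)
Your argument is correct. The key computations check out: the trinomial cube expands exactly as you state, and the identity $AC=B^2$ (hence $A^2C=AB^2$ and also $B^2C=AC^2$) follows immediately from the definitions, since
\[
AC=\frac{f_6^4 f_9^6}{f_3^8 f_{18}^3}\cdot\frac{f_6^2 f_{18}^3}{f_3^6}=\frac{f_6^6 f_9^6}{f_3^{14}}=B^2.
\]
All the eta-quotients that appear after the final substitution have integer coefficients, so the stated common factors $12,48,24,96$ are genuine divisibility statements. (A minor remark: the same collapse also simplifies the $q^{3k+1}$ bracket to $6qA^2B+96q^4B^2C$, though this does not change the gcd.)

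Your route is, however, entirely different from the paper's. The paper proves Theorem~\ref{thm:cong2} by machine, invoking Smoot's \texttt{RaduRK} implementation of Radu's algorithm and simply pointing to an output file; no $q$-series manipulation is carried out by hand. Your proof is elementary and self-contained, using only identity~\eqref{eq:iden-sellers} applied twice together with the algebraic coincidence $AC=B^2$. This directly addresses the authors' own request in the concluding remarks for elementary proofs of the Section~\ref{sec:four} results. The trade-off is that the paper's algorithmic method applies uniformly to all of Theorems~\ref{thm:cong}--\ref{thm:cong3} without further thought, whereas your method is tailored to the $\overline{R_3^\ast}$ and $\overline{R_6^\ast}$ families and explains structurally why the sharp moduli $48$ and $96$ (rather than $24$ and $48$) appear: it is precisely the collapse $A^2C+AB^2=2A^2C$.
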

\begin{remark}
    Alanazi et. al. \cite{AlanaziAlenaziKeithMunagi} had obtained
    \[
    \overline{R_{3}^\ast}(9n+4) \equiv \overline{R_{3}^\ast}(9n+7) \equiv 0 \pmod 3.
    \]
    While Sellers \cite{SellersBAMS} had very recently obtained
        \[
    \overline{R_{3}^\ast}(9n+4) \equiv \overline{R_{3}^\ast}(9n+7) \equiv 0 \pmod 4.
    \]
    Our proof of the first congruence in Theorem \ref{thm:cong2} is independent of the techniques used in the proofs of the results of Alanazi et. al. and Sellers.
\end{remark}

We can also get congruenes for finer arithmetic progressions, we list only two of them without proof here.
\begin{theorem}\label{thm:cong3}
     For all $n\geq 0$, we have
    \begin{align}
    \overline{R_{6}^\ast}(27n+11)&\equiv 0 \pmod{64}\label{cong-n1},\\
        \overline{R_{6}^\ast}(81n+47)&\equiv 0 \pmod{24}\label{cong-n2}.
       \end{align}
\end{theorem}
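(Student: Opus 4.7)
The plan is to prove both congruences in the same spirit as Theorems \ref{thm:cong} and \ref{thm:cong2}: combine the explicit $3$-dissection from Sellers' identity \eqref{eq:iden-sellers} with the modular form / Radu's algorithm machinery used earlier in this section.

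Since $27n+11\equiv 2\pmod 3$ and $81n+47\equiv 2\pmod 3$, both arithmetic progressions lie inside the residue class $3m+2$. From the calculation already carried out in the proof of Lemma \ref{lemma1} we have
$$\sum_{n\geq 0}\overline{R^\ast_6}(3n+2)q^n = 4\,\frac{f_2^3 f_6^3}{f_1^6},$$
which exposes a free factor of $4$. It therefore suffices to prove that the coefficient of $q^{9n+3}$ (resp.\ $q^{27n+15}$) in $f_2^3 f_6^3/f_1^6$ is divisible by $16$ (resp.\ by $6$).

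For the mod-$64$ congruence, I would cube \eqref{eq:iden-sellers} and multiply by $f_6^3$ to obtain an explicit $9$-dissection of $f_2^3 f_6^3/f_1^6$. Writing $A+2qB+4q^2C$ for the three terms of \eqref{eq:iden-sellers}, every cross term $(2qB)^i(4q^2C)^j$ with $i+j\geq 1$ carries a factor of $2^{i+2j}$, and since $A$, $B$, $C$ are supported on powers of $q^3$, a direct inspection of which cross terms contribute to the $3\pmod 9$ subprogression shows that each such contribution is divisible by $2^4$; combined with the leading $4$ this yields the required $2^6=64$. For the mod-$24$ congruence one more iteration of Sellers' identity produces the corresponding $27$-dissection. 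The resulting generating function $\sum_{n\geq 0}\overline{R_6^\ast}(81n+47)q^n$ is then identified as (a constant times) an explicit eta quotient; multiplying by a compensating eta quotient places a suitable multiple in an integer-weight space $M_k(\Gamma_0(N),\chi)$, and Radu's algorithm together with Sturm's bound reduces the mod-$24$ claim to a finite coefficient check. The mod-$3$ part of $24$ is supplied by a Jacobi-triple-product style identity \eqref{eq:jacobi} for the $f_6^3$ factor, exactly as in the proof of the theorem just above.

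The principal obstacle is the level $N$ for the mod-$24$ case. Because the progression $81n+47$ is modulo $81$, the relevant modular forms live on $\Gamma_0(N)$ with $N$ divisible by both a power of $2$ and $3^4$, pushing the Sturm bound into the low hundreds of coefficients; this is heavy but mechanical once automated. A purely combinatorial alternative --- cubing \eqref{eq:iden-sellers} twice and tracking $2$- and $3$-adic valuations of the resulting $27$-dissection cross-terms --- is also feasible, but offers no conceptual savings over the modular form route.
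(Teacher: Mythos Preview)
The paper does not actually prove Theorem~\ref{thm:cong3}; immediately before the statement it says the two congruences are ``listed without proof here.'' By context the intended method is the same as for Theorems~\ref{thm:cong} and~\ref{thm:cong2}: a direct call to Smoot's \texttt{RaduRK} package with the eta-quotient data for $\overline{R_6^\ast}$ and $(m,j)=(27,11)$, respectively $(m,j)=(81,47)$. Your proposal to use Radu's algorithm for the mod-$24$ part is therefore in line with the paper's (unwritten) approach.

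Your elementary argument for \eqref{cong-n1}, however, has a real gap. After the exact identity
\[
\sum_{n\ge 0}\overline{R_6^\ast}(3n+2)q^n \;=\; 4\,\frac{f_2^3 f_6^3}{f_1^6}
\;=\; 4\,f_6^3\,(A+2qB+4q^2C)^3,
\]
the residue-$0$-mod-$3$ part of the cube is $A^3+8q^3B^3+48q^3ABC+64q^6C^3$. Modulo $16$ only $A^3+8q^3B^3$ survives, and the pure term $A^3$ carries \emph{no} factor of $2$ whatsoever. Your sentence about ``direct inspection of which cross terms contribute to the $3\pmod 9$ subprogression'' tacitly ignores this term: knowing only that $A,B,C\in\mathbb{Z}[[q^3]]$ tells you nothing about which of them hit $q^{9n+3}$, and $f_6^3A^3$ certainly does contribute there. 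To salvage the argument you would need a further $3$-dissection of $A$ (and of $B$, for the $8q^3B^3$ term), after which the bookkeeping is no lighter than a single \texttt{RaduRK} call with $m=27$. The same objection applies to the ``one more iteration of Sellers' identity'' for \eqref{cong-n2}: the intermediate series $f_2^3f_6^3/f_1^6$ is not of the shape $f_2/f_1^2$, so \eqref{eq:iden-sellers} does not apply to it directly.
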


Theorems \ref{thm:cong} and \ref{thm:cong2} are proved in Section \ref{sec:proof-thm:cong} using an algorithmic approach, due to Smoot \cite{Smoot}.

The following two congruences will be proved using a manual implementation of an algorithm of Radu \cite{Radu2, Radu}, due to computational difficulties.
\begin{theorem}\label{thm:rd}
    For all $n\geq 0$, we have
    \begin{align}
       \overline{R_{3,5}}(9n+3)&\equiv 0 \pmod 6,\label{cong-8}\\
     \overline{R_{2,5}}(18n+9)&\equiv 0 \pmod 6.\label{cong-9}
    \end{align}
\end{theorem}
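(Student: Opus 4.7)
The plan is to reduce each congruence to a mod-$3$ statement and then establish the latter by a hand-implementation of Radu's Ramanujan--Kolberg algorithm \cite{Radu, Radu2}. Since Corollary~\ref{cor1} gives $\overline{R_{3,5}}(n),\overline{R_{2,5}}(n)\equiv 0\pmod 2$ for all $n\geq 1$, it suffices to prove
$$\overline{R_{3,5}}(9n+3)\equiv 0\pmod 3 \qquad \text{and} \qquad \overline{R_{2,5}}(18n+9)\equiv 0\pmod 3;$$
combined with the mod-$2$ vanishing this yields the congruences \eqref{cong-8} and \eqref{cong-9} modulo $6$.

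From \eqref{eq-gf} the underlying eta-quotients are
$$\sum_{n\geq 0}\overline{R_{3,5}}(n)q^n=\frac{f_2 f_3^2 f_5^2 f_{30}}{f_1^2 f_6 f_{10} f_{15}^2},\qquad \sum_{n\geq 0}\overline{R_{2,5}}(n)q^n=\frac{f_2^3 f_5^2 f_{20}}{f_1^2 f_4 f_{10}^3},$$
of levels $30$ and $20$ respectively. For each instance I would proceed by (i) choosing an admissible triple $(N,M,r)$ satisfying Radu's hypotheses, where $M\in\{9,18\}$ and $r\in\{3,9\}$; (ii) hand-constructing an auxiliary eta-quotient $U(\tau)$ on $\Gamma_0(N')$ for a larger level $N'$ (typically dividing $\operatorname{lcm}(N,M)^2$) so that
$$U(\tau)\sum_{n\geq 0}\overline{R_{\ell,\mu}}(Mn+r)q^n\in M_k(\Gamma_0(N'),\chi)$$
for a suitable weight $k$ and character $\chi$; (iii) expanding this modular form as an integer linear combination $\sum_i\alpha_i G_i(\tau)$ of a basis of eta-quotients for $M_k(\Gamma_0(N'),\chi)$; and (iv) applying Sturm's bound to reduce the identification of that combination, together with the verification that each $\alpha_i\equiv 0\pmod 3$, to a finite $q$-coefficient comparison.

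The main obstacle is step (ii). The symbolic package of Smoot \cite{Smoot} aborts here because $N'$ can be as large as $270$ for the first congruence and $360$ for the second, which makes the cusp count and the search space for admissible $U$ prohibitively large. Working by hand, one must evaluate the Ligozat cusp valuations of the dissected series at every cusp of $\Gamma_0(N')$ and then solve a system of linear inequalities for the exponents of $U$; once a workable $N'$ is guessed, this is delicate but tractable. Steps (iii) and (iv) then reduce to linear algebra over $\mathbb{Z}$ and a mechanical $q$-expansion check out to Sturm's bound (on the order of a few hundred coefficients in each case), after which the congruences modulo $3$, and hence modulo $6$, follow.
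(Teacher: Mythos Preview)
Your proposal is a plan, not a proof: steps (ii)--(iv) are described in the conditional (``I would proceed by\ldots'', ``once a workable $N'$ is guessed\ldots'') and none of them is actually carried out. You do not exhibit the auxiliary eta-quotient $U$, you do not verify any cusp inequalities, you do not produce the basis decomposition, and you do not perform the finite coefficient check. Since the whole force of Radu's method lies in these concrete choices and verifications, what you have written does not establish either congruence.

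Moreover, your level estimates are far too pessimistic and stem from a misreading of how $N$ is chosen in Radu's framework. The paper applies Lemma~\ref{Lemma Radu 1} directly with $u=6$ (no preliminary reduction to modulus $3$ is needed) and with the parameters
\[
(m,M,N,t,(r_\delta))=(9,30,30,3,(-2,1,2,2,-1,-1,-2,1)),\quad (r_\delta')=(12,0,\ldots,0),\quad \lfloor\nu\rfloor=101
\]
for \eqref{cong-8}, and
\[
(m,M,N,t,(r_\delta))=(18,20,60,9,(-2,3,-1,2,-3,1)),\quad (r_\delta')=(54,0,\ldots,0),\quad \lfloor\nu\rfloor=1322
\]
for \eqref{cong-9}. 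In each case the $\Delta^\ast$ conditions and the nonnegativity of $p(\gamma)+p'(\gamma)$ at the cusp representatives supplied by Lemma~\ref{Lemma Wang 1} are routine, and the congruence follows from a machine check up to $n=\lfloor\nu\rfloor$. So the required levels are $30$ and $60$, not $270$ and $360$; the ``computational difficulty'' mentioned in the paper refers only to Smoot's automated package, not to the size of the level. Your reduction to modulus $3$ via Corollary~\ref{cor1} is valid but buys nothing, since Lemma~\ref{Lemma Radu 1} handles $u=6$ just as easily.
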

\noindent We prove Theorem \ref{thm:rd} in Section \ref{sec:rd}.

\subsection{Preliminaries on the Algorithmic Approach}\label{sub:alg}

In this section we describe our methods for proving Theorems \ref{thm:cong}, \ref{thm:cong2} and \ref{thm:rd}. Theorems \ref{thm:cong} and \ref{thm:cong2} are proved using a Mathematica implementation of an algorithm of Radu \cite{Radu2, Radu} due to Smoot \cite{Smoot}, while Theorem \ref{thm:rd} is proved using Radu's algorithm manually. We first describe Radu's algorithm in Subsection \ref{sec:radualg} and then Smoot's implementation in Subsection \ref{sec:smoot}.

\subsection{Radu's Algorithm}\label{sec:radualg}

For a positive integer $N$, we define the following matrix groups:
\begin{align*}
\Gamma & :=\left\{\begin{bmatrix}
a  &  b \\
c  &  d      
\end{bmatrix}: a, b, c, d \in \mathbb{Z}, ad-bc=1
\right\},\\
\Gamma_{\infty} & :=\left\{
\begin{bmatrix}
1  &  n \\
0  &  1      
\end{bmatrix} \in \Gamma : n\in \mathbb{Z}  \right\}.
\end{align*}
$$\Gamma_{0}(N) :=\left\{
\begin{bmatrix}
a  &  b \\
c  &  d      
\end{bmatrix} \in \Gamma : c\equiv~0\pmod N \right\},$$
and
$$[\Gamma : \Gamma_0(N)]:=N\prod_{\ell\mid N} \left( 1+\dfrac{1}{\ell}\right),$$
where $\ell$ is a prime.

We need some preliminary results, which describe an algorithmic approach to proving partition concurrences, developed by Radu \cite{Radu, Radu2}. For integers $M\ge1$, suppose that $R(M)$ is the set of all the integer sequences \[
(r_\delta):=\left(r_{\delta_1},r_{\delta_2},r_{\delta_3},\ldots,r_{\delta_k}\right)
\]
indexed by all the positive divisors $\delta$ of $M$, where $1=\delta_1<\delta_2<\cdots<\delta_k=M$. For integers $m\ge1$, $(r_\delta)\in R(M)$, and $t\in\{0,1,2,\ldots,m-1\}$, we define the set $P(t)$ as
\begin{align}
\label{Pt} P(t):=&\bigg\{t^\prime \in \{0,1,2,\ldots,m-1\} : t^{\prime}\equiv ts+\dfrac{s-1}{24}\sum_{\delta\mid M}\delta r_\delta \pmod{m} \notag\\
&\quad \text{~for some~} [s]_{24m}\in \mathbb{S}_{24m}\bigg\},
\end{align}
where $[x]_m$ denote the residue class of $x$, $\mathbb{Z}_m^{*}$ denote the set of the invertible elements of $\mathbb{Z}_m$, and $\mathbb{S}_m$ denote the set of the squares of $\mathbb{Z}_m^{*}$.

For integers $N\ge1$, $\gamma:=
\begin{pmatrix}
	a  &  b \\
	c  &  d
\end{pmatrix} \in \Gamma$, $(r_\delta)\in R(M)$, and $(r_\delta^\prime)\in R(N)$, we also define
	\begin{align*}
	p(\gamma)&:=\min_{\lambda\in\{0, 1, \ldots, m-1\}}\dfrac{1}{24}\sum_{\delta\mid M}r_{\delta}\dfrac{\gcd(\delta (a+ k\lambda c), mc)^2}{\delta m},\\
	p^\prime(\gamma)&:=\dfrac{1}{24}\sum_{\delta\mid N}r_{\delta}^\prime\dfrac{\gcd(\delta, c)^2}{\delta}.
	\end{align*}

For integers $m\ge1$; $M\ge1$, $N\ge1$,  $t\in \{0,1,2,\ldots,m-1\}$, $k:=\gcd\left(m^2-1,24\right)$, and $(r_{\delta})\in R(M)$, define $\Delta^{*}$ to be the set of all tuples $(m, M, N, t, (r_{\delta}))$ such that all of the following conditions are satisfied
	
	\begin{enumerate}
		\item[1.] Prime divisors of $m$ are also prime divisors of $N$;
		\item[2.] If $\delta\mid M$, then $\delta\mid mN$ for all $\delta\geq1$ with $r_{\delta} \neq 0$;
		\item[3.] $\displaystyle{24\mid kN\sum_{\delta\mid M}\dfrac{r_{\delta} mN}{\delta}}$;
		\item[4.] $\displaystyle{8\mid kN\sum_{\delta\mid M}r_{\delta}}$;
		\item[5.]  $\dfrac{24m}{\left(-24kt-k{\displaystyle{\sum_{\delta\mid M}}{\delta r_{\delta}}},24m\right)} \mid N$;
  \item[6.] If $2|m$ then either $4|kN$ and $8|\delta N$ or $2|s$ and $8|(1-j)N$, where $\prod_{\delta|M}\delta^{|r_\delta|}=2^s\cdot j$.
	\end{enumerate}

We now state a result of Radu \cite{Radu2}, which we use in completing the proof of Theorem \ref{thm:cong}.
\begin{lemma}\label{Lemma Radu 1}
\textup{\cite[Lemma 4.5]{Radu2}} Suppose that $(m, M, N, t, (r_{\delta}))\in\Delta^{*}$, $(r'_{\delta}):=(r'_{\delta})_{\delta \mid N}\in R(N)$, $\{\gamma_1,\gamma_2, \ldots, \gamma_n\}\subseteq \Gamma$ is a complete set of representatives of the double cosets of $\Gamma_{0}(N) \backslash \Gamma/ \Gamma_\infty$, and  $\displaystyle{t_{\min}:=\min_{t^\prime \in P(t)} t^\prime}$,
\begin{align}
	\label{Nu} \nu:= \dfrac{1}{24}\left( \left( \sum_{\delta\mid M}r_{\delta}+\sum_{\delta\mid N}r_{\delta}^\prime\right)[\Gamma:\Gamma_{0}(N)] -\sum_{\delta\mid N} \delta r_{\delta}^\prime-\frac{1}{m}\sum_{\delta|M}\delta r_{\delta}\right)
	- \frac{ t_{min}}{m},
 	\end{align}
$p(\gamma_j)+p^\prime(\gamma_j) \geq 0$ for all $1 \leq j \leq n$, and $\displaystyle{\sum_{n=0}^{\infty}A(n)q^n:=\prod_{\delta\mid M}f_\delta^{r_\delta}.}$ If for some integers $u\ge1$, all $t^\prime \in P(t)$, and $0\leq n\leq \lfloor\nu\rfloor$, $A(mn+t^\prime)\equiv0\pmod u$  is true,  then for integers $n\geq0$ and all $t^\prime\in P(t)$, we have $A(mn+t^\prime)\equiv0\pmod u$.
\end{lemma}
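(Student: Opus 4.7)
The strategy is to reduce the a priori infinite family of congruences $A(mn+t') \equiv 0 \pmod{u}$ to the verification of only finitely many initial cases, by packaging the relevant arithmetic progression inside a holomorphic modular form on $\Gamma_0(N)$ and then invoking Sturm's theorem. First I would note that applying the $U_m$-operator to $f(q) := \prod_{\delta \mid M} f_\delta^{r_\delta}$, together with the standard twisting by $24m$-th roots of unity---this is the origin of the $\tfrac{s-1}{24}\sum_{\delta \mid M}\delta r_\delta$ correction appearing in the definition of $P(t)$---produces a $q$-series whose coefficients are (up to an explicit $q^{-t_{\min}/m}$ shift) the values $A(mn+t')$ for $t' \in P(t)$. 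The set $P(t)$ itself arises as the orbit of $t$ under the action of $\mathbb{S}_{24m}$, so that the combined series is Galois-invariant and hence a legitimate candidate for a modular object on $\Gamma_0(N)$.

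Next, multiplying by the auxiliary eta-quotient $g(q) := \prod_{\delta \mid N} f_\delta^{r'_\delta}$ yields a function $F(\tau)$ whose transformation behaviour is governed by the six conditions defining $\Delta^*$. These are essentially the Ligozat criteria adapted to the $U_m$-setting: (1) fixes the level, (2) ensures the factors actually live on $\Gamma_0(N)$, (3) and (4) are the integrality conditions at the cusps $0$ and $\infty$ coming from the Dedekind eta transformation, (5) is the refined conductor condition accounting for the $U_m$ extraction, and (6) handles the parity obstruction that appears whenever $2 \mid m$. Verifying these together shows that $F$ transforms as a modular form on $\Gamma_0(N)$ with trivial multiplier system.

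The non-negativity $p(\gamma_j) + p'(\gamma_j) \geq 0$ over representatives $\gamma_j$ of $\Gamma_0(N) \backslash \Gamma / \Gamma_\infty$ would then be reinterpreted as the statement that the order of $F$ at each cusp of $\Gamma_0(N)$ is non-negative, so that $F$ extends holomorphically to the compact modular curve $X_0(N)$. The valence formula bounds the total number of zeros of such a form by a multiple of $[\Gamma:\Gamma_0(N)]$, and the quantity $\nu$ in the statement is precisely this Sturm bound after subtracting the contributions from the cusps other than $\infty$, the sum $\sum_{\delta \mid N} \delta r_\delta'$ and $\frac{1}{m}\sum_{\delta\mid M} \delta r_\delta$ accounting for the eta-orders there, and the shift $t_{\min}/m$ accounting for the normalization. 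Consequently, if the first $\lfloor \nu \rfloor + 1$ Fourier coefficients of $F$ are divisible by $u$, then $F \equiv 0 \pmod{u}$ as a full power series in $q$, which directly yields the desired congruences $A(mn + t') \equiv 0 \pmod u$ for all $n \geq 0$ and all $t' \in P(t)$.

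The main obstacle in executing this plan rigorously lies in the transformation bookkeeping. Verifying that the six conditions defining $\Delta^*$ are together both necessary and sufficient for $F$ to be a holomorphic modular form on $\Gamma_0(N)$ with trivial character requires a careful application of the Dedekind eta transformation at every cusp, together with a delicate case analysis when $2 \mid m$ (which is the source of the awkward condition (6)). The internal minimum over $\lambda$ in the definition of $p(\gamma)$ is the feature that reflects the different cusp representatives produced by the $U_m$-action, and organising these uniformly to obtain the compact statement of the lemma is the real technical heart of the argument. Once that framework is in place, extracting the individual congruences from the single vanishing statement on $F$ is formal, using that $P(t)$ is a single orbit under $\mathbb{S}_{24m}$ and that integer congruences are invariant under this Galois action.
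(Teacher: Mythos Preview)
The paper does not prove this lemma at all; it simply quotes it verbatim from \cite[Lemma 4.5]{Radu2} and uses it as a black-box input to the later computations. Your outline---package $\prod_{t'\in P(t)}\sum_n A(mn+t')q^n$ times an auxiliary eta-quotient as a modular function on $\Gamma_0(N)$, read the $\Delta^*$ conditions as Ligozat-type transformation and cusp criteria, interpret $p(\gamma_j)+p'(\gamma_j)\ge 0$ as holomorphy at the cusps, and identify $\nu$ with the Sturm bound---is a faithful high-level summary of how Radu's original argument proceeds, but there is nothing in the present paper to compare it against. If anything, you have supplied more than the authors do: they treat the lemma purely as a citation, so for the purposes of this paper the ``proof'' is just the reference.
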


The following lemma supports Lemma \ref{Lemma Radu 1} in the proof of Theorem \ref{thm:cong}.
\begin{lemma}\label{Lemma Wang 1}\textup{\cite[Lemma 2.6]{Radu2}} Let $N$ or $\frac{N}{2}$ be a square-free integer, then we have
		\begin{align*}
		\bigcup_{\delta\mid N}\Gamma_0(N)\begin{pmatrix}
		1  &  0 \\
		\delta  &  1
		\end{pmatrix}\Gamma_ {\infty}=\Gamma.
		\end{align*}
	\end{lemma}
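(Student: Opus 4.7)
The plan is to reduce the statement to the classical description of the cusps of $\Gamma_0(N)$. First I would use the canonical bijection
\[
\Gamma_0(N) \backslash \Gamma / \Gamma_\infty \;\longleftrightarrow\; \{\text{cusps of } \Gamma_0(N)\},\qquad \Gamma_0(N)\gamma\Gamma_\infty \mapsto \Gamma_0(N)\cdot \gamma(\infty),
\]
which is well defined because $\Gamma_\infty$ is precisely the stabilizer of $\infty$ in $\Gamma$. Under this correspondence the double coset containing $\begin{pmatrix}1 & 0 \\ \delta & 1\end{pmatrix}$ is sent to the cusp $1/\delta$, so the lemma is equivalent to the claim that $\{1/\delta : \delta \mid N\}$ furnishes a complete set of inequivalent cusp representatives for $\Gamma_0(N)$.

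Next I would invoke the standard (Shimura-type) parameterization of the cusps of $\Gamma_0(N)$: they are represented by fractions $a/c$ with $c \mid N$ and $a$ ranging over $(\mathbb{Z}/g_c\mathbb{Z})^{\times}$, where $g_c := \gcd(c, N/c)$; in particular, the total number of inequivalent cusps equals $\sum_{c \mid N} \phi(g_c)$. The key arithmetic observation is that under the hypothesis we have $g_c = 1$ for every divisor $c$ of $N$. If $N$ is square-free this is immediate. If $N = 2m$ with $m$ odd and square-free, write $c = 2^{e} c'$ with $e \in \{0,1\}$ and $c' \mid m$, so that $N/c = 2^{1-e}(m/c')$; since $\gcd(c', m/c') = 1$ by square-freeness of $m$, and since the powers of $2$ in $c$ and in $N/c$ are complementary, one again concludes $g_c = 1$.

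Consequently the total count of inequivalent cusps is exactly $\tau(N)$, and the candidates $1/\delta$ for $\delta \mid N$ are pairwise inequivalent: two such cusps are $\Gamma_0(N)$-equivalent only when the denominators coincide, because $g_\delta = 1$ collapses any residue-class ambiguity. Hence the $\tau(N)$ double cosets $\Gamma_0(N)\begin{pmatrix}1 & 0 \\ \delta & 1\end{pmatrix}\Gamma_\infty$ are pairwise disjoint and together exhaust $\Gamma$, giving the stated equality. The main obstacle is the bookkeeping in the $N/2$-square-free case, where one has to track the $2$-parts of $c$ and $N/c$ simultaneously; once that is organized, every remaining step is a formal invocation of cusp theory.
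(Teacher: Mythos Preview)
The paper does not supply its own proof of this lemma; it is simply quoted from Radu--Sellers \cite{Radu2}, so there is no in-paper argument to compare against. Your cusp-counting strategy is the standard and correct one, but your case analysis contains a slip.

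You write ``If $N = 2m$ with $m$ odd and square-free\ldots''. But if $m=N/2$ is odd and square-free then $N$ itself is already square-free, so this case adds nothing new. The genuinely new situation allowed by the hypothesis ``$N/2$ square-free'' is $N=4k$ with $k$ odd and square-free (i.e.\ $N/2=2k$ is square-free but $N$ is not). In that case your assertion $g_c=\gcd(c,N/c)=1$ fails: for $c=2d$ with $d\mid k$ one has $N/c=2k/d$ and hence $g_c=2$. Fortunately this does not break the argument, because $\phi(2)=1$; one still gets $\phi(g_c)=1$ for every $c\mid N$, so the total number of cusps is $\tau(N)$, and the $\tau(N)$ fractions $1/\delta$ remain pairwise inequivalent (distinct denominators as divisors of $N$ already force inequivalence, independently of the value of $g_\delta$). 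With this correction---replacing ``$g_c=1$'' by ``$g_c\in\{1,2\}$, hence $\phi(g_c)=1$''---your proof goes through.
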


\subsection{Smoot's Implementation of Radu's Algorithm}\label{sec:smoot}

We will also use Smoot's \cite{Smoot} implementation of Radu's algorithm \cite{Radu2,Radu}, which can be used to prove Ramanujan type congruences. The algorithm takes as an input the generating function
\[
\sum_{n= 0}^\infty a_r(n)q^n=\prod_{\delta|M}\prod_{n= 1}^\infty (1-q^{\delta n})^{r_\delta},
\]
and positive integers $m$ and $N$, with $M$ another positive integer and $(r_\delta)_{\delta|M}$ is a sequence indexed by the positive divisors $\delta$ of $M$. With this input, Radu's algorithm tries to produce a set $P_{m,j}(j)\subseteq \{0,1,\ldots, m-1\}$ which contains $j$ and is uniquely defined by $m, (r_\delta)_{\delta|M}$ and $j$. Then, it decides if there exists a sequence $(s_\delta)_{\delta |N}$ such that
\[
q^\alpha \prod_{\delta|M}\prod_{n=1}^\infty (1-q^{\delta n})^{s_\delta} \cdot \prod_{j^\prime \in P_{m,j}(j)}\sum_{n=0}^\infty a(mn+j^\prime)q^n,
\]
is a modular function with certain restrictions on its behaviour on the boundary of $\mathbb{H}$.

Smoot \cite{Smoot} implemented this algorithm in Mathematica and we use his \texttt{RaduRK} package, which requires the software package \texttt{4ti2}. Documentation on how to intall and use these packages are available from Smoot \cite{Smoot}. We use this implemented \texttt{RaduRK} algorithm to prove Theorem \ref{thm:cong} in the next section.

It is natural to guess that $N=m$ (which corresponds to the congruence subgroup $\Gamma_0(N)$), but this is not always the case, although they are usually closely related to one another. The determination of the correct value of $N$ is an important problem for the usage of \texttt{RaduRK} and it depends on the $\Delta^\ast$ criterion described in the previous subsection. It is easy to check the minimum $N$ which satisfies this criterion by running \texttt{minN[M, r, m, j]}.

\subsection{Proofs of Theorems \ref{thm:cong} and \ref{thm:cong2}}\label{sec:proof-thm:cong}

In this section we prove Theorems \ref{thm:cong} and \ref{thm:cong2} using Smoot's implementation described in Subsection \ref{sec:smoot}.

\begin{proof}[Proof of Theorem \ref{thm:cong}]
 Since the proof of all congruences listed in Theorem \ref{thm:cong} are similar, we only prove \eqref{cong-1} here. The rest of the output can be obtained by visiting \url{https://manjilsaikia.in/publ/mathematica/smoot_lm.nb}. We use the procedure call \[\texttt{RK[12,12,\{-2,3,2,-1,-3,1\},9,6]}\] which gives a straight proof of \eqref{cong-1}. Here we give the output of \texttt{RK}.

\allowdisplaybreaks{
		\begin{align*}
		\texttt{In[1] := } & \texttt{RaduRK[12,12,\{-2,3,2,-1,-3,1\},9,6]}\\
		& \prod_{\texttt{$\delta$|M}} (\texttt{q}^{\delta };\texttt{q}^{\delta })_{\infty }^{\texttt{r}_{\delta }}  = \sum_{\texttt{n=0}}^{\infty } \texttt{a}(\texttt{n})\,\texttt{q}^\texttt{n}\\
		& \fbox{$\texttt{f}_\texttt{1}(\texttt{q})\cdot \prod\limits_{\texttt{j}'\in \texttt{P}_{\texttt{m,r}}(\texttt{j}) } \sum\limits_{\texttt{n=0}}^\infty \texttt{a}(\texttt{mn}+\texttt{j}')\,\texttt{q}^\texttt{n} = \sum\limits_{\texttt{g}\in \texttt{AB}} \texttt{g}\cdot \texttt{p}_\texttt{g}(\texttt{t}) $} \\
		& \texttt{Modular Curve: }\texttt{X}_\texttt{0}(\texttt{N}) \\
		\texttt{Out[2] = }\\
  &\begin{array}{c|c}
 \text{N:} & 12 \\
\hline
 \text{$\{$M,(}r_{\delta })_{\delta |M}\text{$\}$:} & \{12,\{-2,3,2,-1,-3,1\}\} \\
\hline
 \text{m:} & 9 \\
\hline
 P_{m,r}\text{(j):} & \{6\} \\
\hline
 f_1\text{(q):} & \dfrac{(q;q)_{\infty }^5 \left(q^4;q^4\right)_{\infty }^2
   \left(q^6;q^6\right)_{\infty }^9}{q^3 \left(q^2;q^2\right){}_{\infty }
   \left(q^3;q^3\right)_{\infty }^3 \left(q^{12};q^{12}\right)_{\infty }^{12}} \\
\hline
 \text{t:} & \dfrac{\left(q^4;q^4\right)_{\infty }^4 \left(q^6;q^6\right)_{\infty }^2}{q
   \left(q^2;q^2\right)_{\infty }^2 \left(q^{12};q^{12}\right)_{\infty }^4} \\
\hline
 \text{AB:} & \{1\} \\
\hline
 \left\{p_g\text{(t): g$\in $AB$\}$}\right. & \left\{6 t^3+6 t^2-6 t-6\right\} \\
\hline
 \text{Common Factor:} & 6 \\
\end{array}
		\end{align*}}

The interpretation of this output is as follows.

The first entry in the procedure call \texttt{RK[12,12,\{-2,3,2,-1,-3,1\},9,6]} corresponds to specifying $N=12$, which fixes the space of modular functions
\[
M(\Gamma_0(N)):=\text{the algebra of modular functions for $\Gamma_0(N)$}.
\]

 The second and third entry of the procedure call \texttt{RK[12,12,\{-2,3,2,-1,-3,1\},9,6]} gives the assignment $\{M, (r_\delta)_{\delta|M}\}=\{12, (-2,3,2,-1,-3,1)\}$, which corresponds to specifying $(r_\delta)_{\delta|M}=(r_1,r_2,r_4)=(-2,3,2,-1,-3,1)$, so that
 \[
\sum_{n\geq 0}\overline{R}_{2,3}(n)q^n=\prod_{\delta|M}(q^\delta;q^\delta)^{r_\delta}_\infty = \frac{f_2^3f_3^2f_{12}}{f_1^2f_4f_6^3}.
 \]

 The last two entries of the procedure call \texttt{RK[12,12,\{-2,3,2,-1,-3,1\},9,6]} corresponds to the assignment $m=8$ and $j=7$, which means that we want the generating function
 \[
\sum_{n\geq 0}\overline{R}_{2,3}(n)(mn+j)q^n=\sum_{n\geq 0}\overline{R}_{2,3}(n)(9n+6)q^n.
 \]
 So, $P_{m,r}(j)=P_{9,r}(6)$ with $r=(-2,3,2,-1,-3,1)$.

The output $P_{m,r}(j):=P_{9,(-2,3,2,-1,-3,1)}(6)=\{6\}$ means that there exists an infinite product
\[
f_1(q)=\dfrac{(q;q)_{\infty }^5 \left(q^4;q^4\right)_{\infty }^2
   \left(q^6;q^6\right)_{\infty }^9}{q^3 \left(q^2;q^2\right){}_{\infty }
   \left(q^3;q^3\right)_{\infty }^3 \left(q^{12};q^{12}\right)_{\infty }^{12}},
\]
such that
\[
f_1(q)\sum_{n\geq 0}\overline{R}_{2,3}(n)(9n+6)q^n\in M(\Gamma_0(12)).
\]

Finally, the output
\[
t=\dfrac{\left(q^4;q^4\right)_{\infty }^4 \left(q^6;q^6\right)_{\infty }^2}{q
   \left(q^2;q^2\right)_{\infty }^2 \left(q^{12};q^{12}\right)_{\infty }^4}, \quad AB=\{1\}, \quad \text{and}\quad \{p_g\text{(t): g$\in AB$\}},
\]
presents a solution to the question of finding a modular function $t\in M(\Gamma_0(12))$ and polynomials $p_g(t)$ such that
\[
f_1(q)\sum_{n\geq 0}\overline{R}_{2,3}(9n+6)q^n =\sum_{g\in AB}p_g(t)\cdot g
\]
In this specific case, we see that the singleton entry in the set $\{p_g\text{(t): g$\in AB$\}}$ has the common factor $6$, thus proving equation \eqref{cong-1}.
\end{proof}

\begin{remark}
    The interested reader can refer to \cite{Saikia} or \cite{AndrewsPaule2} for some more recent applications of the method.
\end{remark}

\begin{proof}[Proof of Theorem \ref{thm:cong2}]
    Since the proof of Theorem \ref{thm:cong2} is similar to the proof of Theorem \ref{thm:cong}, we just refer the reader to the output file available at \url{https://manjilsaikia.in/publ/mathematica/smoot_rl.nb}
\end{proof}

\subsection{Proof of Theorem \ref{thm:rd}}\label{sec:rd}
We use the material in Subsection \ref{sec:radualg} without commentary. We give the full details for the proof of \eqref{cong-8}, and mention the parameters in Radu's algorithm for \eqref{cong-9}.

 For the purposes of \eqref{cong-8}, it is enough to take \[(m,M,N,t,(r_\delta))=(9, 30, 30, 3, (-2,1,2,2,-1,-1,-2,1)).\] It is routine to check that this choice satisfies the $\Delta^{\ast}$ conditions. By equation \eqref{Pt} we see that $P(t)=\{3\}$. For the choice of $(r_\delta^\prime)=(12,0,0,0,0,0,0,0) $ we see that
\[p\left(\begin{pmatrix}
1 & 0\\
\delta & 1
\end{pmatrix}\right)+p^{\prime}\left(\begin{pmatrix}
1 & 0\\
\delta & 1
\end{pmatrix}\right)\ge 0\quad \text{for all $\delta\mid N$,}
\]
and $\lfloor \nu \rfloor=101$ for $t=3$. So we need to just check the validity of \eqref{cong-8} for all $n\leq \lfloor \nu \rfloor$ and then by Lemmas \ref{Lemma Radu 1} and \ref{Lemma Wang 1} we would have proved our result. This can be checked using Mathematica and hence the result follows. 

For \eqref{cong-9}, we take
\[(m,M,N,t,(r_\delta))=(18, 20, 60, 9, (-2,3,-1,2,-3,1)),\]
which will give us 
\[
P(t)=\{9\}, \quad (r_\delta^\prime)=(54,0,0,0,0,0,0,0,0,0,0,0) \quad \text{and} \quad \lfloor \nu \rfloor  = 1322.
\]
We can then verify the congruence up to $n=1322$ via Mathematica from which the result follows.\qed

\section{Concluding Remarks}\label{sec:conc}

\begin{enumerate}
\item It is possible to give a combinatorial proof of Theorem \ref{thm7way}.
\item A modulo $8$ characterization in the same vein as the modulo $4$ characterization in Section \ref{sec:three} is also possible. We leave it as an open problem.
\item It is desirable to have elementary proofs of the results in Section \ref{sec:four}.
\item A further study of congruences is also desired. To that end, we make the following conjecture.
\begin{conjecture}
    For all $n\geq 0$, $\ell \geq 2$, and $1\leq k \leq \ell$, we have
    \[
    \overline{R_{4,9}}(4\ell n+4k)\equiv 0 \pmod 6.
    \]
\end{conjecture}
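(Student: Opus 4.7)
First, I would recast the conjecture as the cleaner single assertion
\[
\overline{R_{4,9}}(4m) \equiv 0 \pmod 6 \qquad \text{for every integer } m \geq 1.
\]
Indeed, as $n \geq 0$, $\ell \geq 2$, $1 \leq k \leq \ell$ vary, the sum $\ell n + k$ takes every positive integer value (for each $m \geq 1$ one may take $\ell = \max(m,2)$, $n = 0$, $k = m$), and conversely $4\ell n + 4k = 4m$ with $m = \ell n + k \geq 1$. The mod-$2$ half is immediate from Corollary~\ref{cor1}, so the real content is the mod-$3$ congruence
\[
\sum_{m \geq 0} \overline{R_{4,9}}(4m)\, q^m \equiv 1 \pmod{3}.
\]
Moreover, the established congruences for $\overline{R_{4,9}}(8n+4)$, $\overline{R_{4,9}}(12n+4)$, and $\overline{R_{4,9}}(12n+8)$ in Theorem~\ref{thm:cong} already settle this for every $m$ that is odd or not divisible by $3$, so the residual task is to prove $\overline{R_{4,9}}(24m') \equiv 0 \pmod 3$ for all $m' \geq 1$.

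I would attack the residual case by computing a $4$-dissection modulo $3$ of
\[
G(q) = \frac{f_2 f_4^2 f_9^2 f_{72}}{f_1^2 f_8 f_{18} f_{36}^2} = \frac{\varphi(-q^4)\varphi(-q^9)}{\varphi(-q)\varphi(-q^{36})}.
\]
First, I would use the Frobenius-type congruence $f_k^3 \equiv f_{3k} \pmod 3$ to collapse the factors $f_9, f_{18}, f_{36}, f_{72}$ into powers of $f_3, f_6, f_{12}, f_{24}$ (so in particular $f_9^2/f_1^2 \equiv f_1 f_3^5 \pmod 3$). Next, I would perform two successive $2$-dissections via standard Hirschhorn-type identities for quotients like $f_2/f_1^2$ and for $\varphi(q)$. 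The goal is to show that, after all simplifications, the multiples-of-$4$ component of the result reduces to $1$ in $\mathbb{F}_3[[q]]$, which is exactly the desired congruence.

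The main obstacle is computational bulk: two rounds of $2$-dissection applied to a rational product with factors up to $f_{72}$ produce lengthy intermediate expressions, and verifying the mod-$3$ cancellations by hand is delicate. A complementary route is to identify the shifted series $q^\alpha \sum_{m \geq 0} \overline{R_{4,9}}(4m)\, q^m$ as a specific modular function on some $\Gamma_0(N)$ and then verify the mod-$3$ congruence to $1$ through a finite coefficient check via Sturm's bound. A direct application of Smoot's \texttt{RaduRK} implementation is not available here: the call attempting $\overline{R_{4,9}}(4n+0) \equiv 0 \pmod 6$ fails at $n = 0$ (since $\overline{R_{4,9}}(0) = 1$), and recursively splitting the residue-zero class into finer residues modulo $16, 64, 256, \ldots$ produces an infinite descent that never terminates; hence a closed-form $q$-series or modular-form identity, rather than any finite algorithmic check, is essential to finish the proof.
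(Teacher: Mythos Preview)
The paper does \emph{not} prove this statement: it appears in the concluding remarks explicitly labelled as a \texttt{Conjecture}, so there is no proof in the paper for your attempt to be compared against.

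That said, your reformulation and reductions are sound. The equivalence with the single claim $\overline{R_{4,9}}(4m)\equiv 0\pmod 6$ for all $m\ge 1$ is correct; the mod-$2$ half does follow from Corollary~\ref{cor1}; and the residual case after invoking the three congruences (\ref{cong-2}), (\ref{cong-3}), and the one for $\overline{R_{4,9}}(8n+4)$ in Theorem~\ref{thm:cong} is exactly $\overline{R_{4,9}}(24m')\equiv 0\pmod 3$ for $m'\ge 1$. Your diagnosis of why a direct \texttt{RaduRK} call cannot close this out (the constant term $\overline{R_{4,9}}(0)=1$ sits permanently in every residue-zero subclass) is also on point.

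What remains is genuinely open, and your write-up is a plan rather than a proof. Neither the $4$-dissection modulo $3$ nor the Sturm-bound verification has been carried out; you have only indicated that one \emph{could} try these and flagged the expected computational bulk. To turn this into a proof you must actually perform one of them: either exhibit the explicit $2$-dissection identities, track the $q^{4m}$-component through to a closed form, and show it equals $1$ in $\mathbb{F}_3[[q]]$; or identify the weight, level, and character making $\sum_{m\ge 0}\overline{R_{4,9}}(4m)q^m - 1$ (suitably normalized) congruent modulo $3$ to a cusp form, compute the Sturm bound, and verify vanishing of enough coefficients. Until one of those computations is completed, the argument does not settle the conjecture.
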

\end{enumerate}

\end{document}